
\documentclass[12pt,reqno]{amsart}

\usepackage[arrow,matrix,curve]{xy}

\usepackage[dvips]{graphicx} 

\usepackage{amssymb, latexsym, amsmath, amscd, array, hyperref, epigraph
%
%
}

\usepackage{framed,multicol} 

\newtheorem{theorem}{Theorem}[section]

\theoremstyle{definition}
\newtheorem{definition}[theorem]{Definition}
\newtheorem{example}[theorem]{Example}
\newtheorem{remark}[theorem]{Remark}

\numberwithin{equation}{section}

\newcommand\N {{\mathbb N}} 

\newcommand\R {{\mathbb R}}
\newcommand\Q {{\mathbb Q}}
 
\newcommand\st{{\rm st}} 

\newcommand\astr{{{}^{\ast}\hspace{-0.5pt}\R}}

\newcommand{\hr} {{{}^{\mathfrak{h}}\hspace*{-0.4pt}\R}}

\DeclareMathOperator{\adequal}{\;\raisebox{-3pt}{$\ulcorner\!\urcorner$}\;}

\newcommand\parisotes{{$\pi\alpha\rho\iota\sigma\acute{o}\tau\eta\varsigma$}}

\newcommand\parisos{{$\pi\acute\alpha\rho\iota\sigma{o}\varsigma$}}


\numberwithin{equation}{section}

\author[J. Bair]{Jacques Bair}\address{J. Bair, HEC-ULG, University of
Liege, 4000 Belgium}\email{J.Bair@ULiege.be}

\author[M. Katz]{Mikhail G. Katz}\address{M. Katz, Department of
Mathematics, Bar Ilan University, Ramat Gan 5290002 Israel}
\email{katzmik@macs.biu.ac.il}

\author[D. Sherry]{David Sherry}\address{D. Sherry, Department of
Philosophy, Northern Arizona University, Flagstaff, AZ 86011, US}
\email{David.Sherry@nau.edu}

\begin{document}

\thispagestyle{empty}


\title[Fermat's dilemma: Why did he keep mum on infinitesimals?]
{Fermat's dilemma: Why did he keep mum on infinitesimals? and the
European theological context}

\begin{abstract}
The first half of the 17th century was a time of intellectual ferment
when wars of natural philosophy were echoes of religious wars, as we
illustrate by a case study of an apparently innocuous mathematical
technique called \emph{adequality} pioneered by the honorable
\emph{judge} Pierre de Fermat, its relation to indivisibles, as well
as to other hocus-pocus.  Andr\'e Weil noted that simple applications
of adequality involving polynomials can be treated purely
algebraically but more general problems like the cycloid curve cannot
be so treated and involve additional tools--leading the
\emph{mathematician} Fermat potentially into troubled waters.  Breger
attacks Tannery for tampering with Fermat's manuscript but it is
Breger who tampers with Fermat's procedure by moving all terms to the
left-hand side so as to accord better with Breger's own interpretation
emphasizing the \emph{double root} idea.  We provide modern proxies
for Fermat's procedures in terms of relations of infinite proximity as
well as the standard part function.

Keywords: adequality; atomism; cycloid; hylomorphism; indivisibles;
infinitesimal; jesuat; jesuit; Edict of Nantes; Council of Trent 13.2
\end{abstract}

\maketitle

\tableofcontents

\epigraph{Father Bertet%
\footnote{See further in note~\ref{f13}.}
sayth your Bookes are in great esteeme, but not to be procured in
Italy.  (Collins to Gregory)}

\section{Introduction}

In his review of Pietro Redondi's book on Galileo, Egidio Festa
writes: ``Avec la sentence du Tribunal du Saint-Office, la
fracture--qui allait se r\'ev\'eler irr\'eversible--entre v\'erit\'e
de science et v\'erit\'e de foi apparut au grand jour''%
\footnote{``Following the verdict of the Tribunal of the Holy Office,
the split--which would prove to be irreversible--between the truth of
science and the truth of faith appeared in broad daylight.''}
\cite[p.\;94]{Fe91}.  Whatever the merits of Festa's generalisation
concerning a conflict opposing science and catholicism (surely the
pagan Aristotelian hylomorphism had to give way to make room for the
emerging atomist science), many modern studies have focused on such
tensions in the context of Galileo and other scientists (including the
jesuit condemnation of indivisibles in the works of Galileo,
Cavalieri, and others).  The scientific community in France closely
followed the 1633 trial; Pierre Gassendi's role is analyzed in
\cite{Fe91}; on Carcavy and Mersenne see Section~\ref{s411}.

Mathematicians are sometimes thought to have been affected to a lesser
extent, though well-known examples include Cavalieri and degli Angeli.
In this text we examine evidence that doctrinal factors may have
affected the presentation of the work of a mathematician not usually
thought of as involved in the Galilean controversy: the honorable
judge Pierre de Fermat.

Fermat's jesuit friend Antoine de Lalouv\`ere at Toulouse was a
``declared enemy'' of indivisibles \cite[p.\;267]{De15}.  Had
Lalouv\`ere expressed similar sentiments about them to Fermat as he
did in his mathematical works, it would have provided little
encouragement for Fermat to be overly explicit concerning the
foundation of his method of \emph{adequality}.

Fermat's professional activities at the Parliament of Toulouse
included occasional stints at the \emph{Chambre de l'\'Edit} at a
nearby town of Castres.  The relevance of the \emph{Chambre}
specifically is the fact that it was the organ of the Parliament that
dealt with interdenominational quarrels (mainly of a pecuniary type),
with judges split evenly between catholics and protestants.  A key
disagreement between catholics and protestants concerned the
interpretation of the eucharist and its relation to atomism and
kindred scientific doctrines like indivisibles and infinitesimals.

Due to the sensitive nature of his engagement at Castres, Fermat may
have found it risky to speak freely of the nature of quantities
involved in his technique of \emph{adequality}.  For his part, jesuit
Tacquet claimed that one must destroy the theory of indivisibles (if
Euclidean geometry is to be saved).  If so, legitimate questions could
arise concerning its practitioners' soundness of judgment and their
fitness to judge cases that come before all-powerful Parliaments.

\subsection{A re-evaluation}
\label{s11}

A re-evaluation of Fermat's technique of adequality and its place in
the history of mathematics involves at least three components, which
we describe respectively as the weak, standard, and strong theses.
\begin{enumerate}
\item
\label{t1}
(weak thesis) the fact itself that Fermat refrained from elaborating
in detail on the foundations of his technique, and specifically
refrained from speculating on the nature of his~$E$ used in the
technique, does not constitute evidence toward a hypothesis that the
technique was a purely algebraic one, due to professional and
religious constraints Fermat was operating under, that would have made
any such speculations reckless as far as his professional career was
concerned.
\item
\label{t2}
(standard thesis) We challenge the view adopted by some scholars
working in a default Weierstrassian interpretive scheme, that Fermat
used a purely algebraic technique unrelated to any notion of
approximation or to infinitesimals.  We argue that internal evidence
in Fermat's work on the cycloid, on integration, and elsewhere points
to a contrary conclusion (see Weil's thesis in Section~\ref{s13b}).
While Fermat may speak of his~$E$ cautiously as being an
\emph{arbitrary} quantity, in his actual mathematical practice he
exploits it as an \emph{arbitrarily small} quantity that can be
discarded in certain calculations.
\item
\label{t3}
(strong thesis) We argue that the procedures of Fermat's work on
maxima and minima as well as tangents, centers of gravity, and
refraction find better proxies in the procedures of Robinson's
infinitesimal-enriched framework \cite{Ro66} than in traditional
Weierstrassian frameworks stripped of infinitesimals, just like the
procedures of Gregory, Leibniz, Euler, Cauchy, and others.
\end{enumerate}

The present article deals mainly with arguing thesis~\eqref{t1}.
Evidence toward thesis~\eqref{t2} appears in Sections~\ref{s2} and
following.  Evidence toward thesis~\eqref{t3} appears in
Section~\ref{s110}.%
\footnote{Cf. \cite{13e}, \cite[Section\;3]{14a},
\cite[Section\;4]{17d}.}
The thesis that Fermat's procedures are more akin to modern
infinitesimal theories than to classical Weierstrassian analysis is
developed in \cite{Ci90} in the context of the Lawvere--Kock--Reyes
framework; see \cite{Ko06}.

\subsection{Procedures versus ontology}
\label{f2}

Cifoletti seeks to justify ``[l]e fait de juxtaposer deux fragments de
th\'eories math\'ematiques s\'epar\'es par plus de trois cent
cinquante ans'' \cite[p.\;4]{Ci90}.  We would like to add the
following two points to her arguments.

(A) Traditionally trained historians of mathematics apply conceptual
frameworks deriving from modern Weierstrassian foundations, separated
from Fermat by 250 years, in order to interpret Fermat; thus, the
interpretation in \cite[p.\;17]{Ba11} is explicitly stated to be based
on an implicit function theorem of Ulisse Dini (1845--1918).

(B) The procedure versus ontology dichotomy enables us to seek proxies
for Fermat's techniques while acknowledging the difference between
ontological foundations of pre-Weierstrassian mathematics and modern
mathematics, whether set-theoretic (as in the case of Weierstrass and
Robinson) or category-theoretic (as in the case of Lawvere).%
\footnote{These two themes were explored more fully in recent articles
in \emph{Erkenntnis} \cite{13f}, \emph{HOPOS} \cite{16a},
\emph{Journal for General Philosophy of Science} \cite{17a}, and
elsewhere.}
On modern proxies see further in Section~\ref{s19}.

\subsection{Adequality and the cycloid curve}
\label{s2}

Fermat treated numerous problems concerning maxima and minima as well
as tangents to curves using a procedure (more precisely, a cluster of
procedures) called \emph{adequality}.  Fermat's work became known
mainly through private letters to correspondents like Carcavy and
Mersenne, and was eventually popularized through inclusion in a book
by Pierre H\'erigone \cite[p.\;2]{Ci90}.

A typical application involves finding the maximum of an expression
involving~$x$ (Fermat used ``$A$'') that we will denote in modern
notation by~$p(x)$.  Fermat compared~$p(x)$ and~$p(x+E)$ and after
doing algebra to eliminate common terms, he would form a relation
(\emph{adequality}) among terms divisible by~$E$.  He would then
cancel out a factor of~$E$ and then discard any remaining terms in~$E$
to obtain the answer%
\footnote{To a modern reader familiar with the calculus the procedure
is reminiscent of quotients of increments occurring in the definition
of the derivative but the period under discussion precedes the
calculus of Newton and Leibniz.}
(a more detailed example appears in Section~\ref{s110}).  Since the
relation between~$p(x)$ and~$p(x+E)$ is not one of true equality,
Fermat referred to it as \emph{adequality}.

The term \emph{adequality} originates with Diophantus' \parisotes{}
(meaning \emph{approximate equality}) and \parisos{} rendered as
\emph{adaequo} in Bachet's translation of Diophantus'
\emph{Arithmetic}, with which Fermat was intimately familiar.  In the
text sent to Descartes in the late 1630s Fermat explicitly names
Diophantus as the source of the term: ``Adaequentur, ut loquitur
Diophantus''%
\footnote{\label{f5}``[Let there be] adequated, to speak like
Diophantus'' (here we use \emph{adequate} as a verb--rather than as an
adjective--in a neologism aiming to convey the meaning of the Latin).
See further in Section~\ref{f14}.}
\cite[p.\;133]{Ta1}.
 
For a transcendental curve like the cycloid (see e.g.,
\cite[p.\;70]{Ci90}), Fermat solved the problem of finding the tangent
line at an arbitrary point of the curve as follows (see
\cite[p.\;144]{Ta3}).  Start with the \emph{defining equation} of the
cycloid (Fermat refers to such an equation as \emph{la propri\'et\'e
sp\'ecifique de la courbe}).  Consider the tangent line (to be
determined) at a point~R of the curve.  Choose a nearby point~N
\emph{on the tangent line}.  Denote by~D and~E respectively the
projections of~R and~N onto a suitable axis.  Denote by~$E$ the
distance between the points D and E.  Substitute the point~N into the
defining equation of the cycloid \emph{as if} it satisfied the latter
(hence not \emph{equality} but \emph{adequality}), and proceed with
the steps involving cancellations of the quantity~$E$; see
\cite[Section\;5]{13e} for details.%
\footnote{Herbert Breger attempts to account for Fermat's treatment of
the cycloid without mentioning any notion of smallness but ends up
falling back on the condition ``if the point E is not too far away of
the point D, then\ldots'' \cite[p.\;206]{Br94}.  Breger's biased
criticism of Paul Tannery's Fermat scholarship is dealt with in
Section~\ref{f9b}.  Breger's flawed rendering of Fermat's method is
dealt with in Section~\ref{s111}.}

Since the cycloid is a transcendental curve, it would be difficult to
interpret Fermat's solution as a purely algebraic procedure involving
a formal symbol~$E$,
%
%
as already noted by Andr\'e Weil:
\begin{quote}
At first Fermat applies the method only to polynomials, in which case
it is of course purely algebraic; later he extends it to increasingly
general problems, including the cycloid \cite[p.\;1146]{We73}.
\end{quote}

\subsection{Weil's thesis}
\label{s13b}

We will elaborate Weil's remark cited above as the following
principle:
\begin{quote}
\emph{Weil's thesis}: simple applications of Fermat's method that
involve polynomials can be treated purely algebraically, but more
general problems (like the cycloid) cannot be so treated and involve
tools that of a more geometric or analytic flavor.%
\footnote{Though apparently straighforward, Weil's thesis has been
resisted by a small number of Fermat scholars recently; see
Section~\ref{s13}.}
\end{quote}
Approximate equality similarly plays a role in Andersen's
interpretation of Fermat's application of adequality to derive Snell's
law of refraction.  The procedure involves discarding
\emph{second-order terms} in~$E$ twice \cite[p.\;55]{An83}.  Such a
procedure is only meaningful for small~$E$ when higher-order terms are
negligible compared to~$E$ itself.  Thus Fermat's treatment of the
cycloid and Snell's law furnishes \emph{textual} evidence that his use
of~$E$ relies on it being \emph{small}.%
\footnote{We therefore reject Breger's claim that ``[t]he idea that
the mathematically conservative Fermat \ldots{} is supposed to have
calculated using infinitesimals is a bold hypothesis for which there
are no textual proofs'' \cite[p.\;23]{Br13}.}

Mahoney assumes matter-of-factly that Fermat used infinitesimals%
\footnote{Without referring to them explicitly as \emph{infinitely
small}, of course.  We should mention that Mahoney's interpretation of
Fermat's method of adequality is incompatible with ours.}
at least in his method of quadrature:
\begin{quote}
Especially where, in his mature method of quadrature, Fermat began to
operate with infinitesimals and limit procedures, the reversion to
synthesis became, as mathematicians from Newton to Cauchy were to
find, far from easy.  \cite[p.\;47]{Ma94} 
\end{quote}
Scholars ranging from Sabetai Unguru to Andr\'e Weil%
\footnote{The pair may be familiar to the reader as the leaders of
opposing schools in the debate over \emph{geometric algebra}; see
e.g., \cite{We78}.}
take it for granted that Fermat's technique involves approximate
equality.  Thus, Unguru writes in reference to Fermat's method of
tangents:
\begin{quote}
This computation involved\ldots{} the concept of `adequality' (taken
over from Diophantus), according to which\ldots{} the two ordinates of
(1)\;an arbitrary point on the tangent other than the point of
tangency and (2) the corresponding point on the curve determined by
the intersection of that ordinate and the curve were assumed to be
`adequal' \cite[p.\;776]{Un76}.
\end{quote}
Since the point on the tangent line and the corresponding point on the
curve (with the same ordinate) are in general \emph{unequal}, we see
that according to Unguru's reading, setting them \emph{adequal}
involves an approximate equality.  Unguru comments further:
\begin{quote}
Initially inspired by Archimedes' treatment in `On spirals', Fermat
later improved and generalized his method of quadrature by means of a
new meaning attached to the concept of `adequality', now taken to
stand for `approximate' or `limiting' equality. (ibid.)
\end{quote}
Similarly, Weil notes that Diophantus uses the Greek term
\begin{quote}
to designate his way of \emph{approximating} a given number by a
rational solution to a given problem (cf.~e.g.~\emph{Dioph.}\;V.11 and
14) \cite[p.\;28]{We84}. (emphasis added)
\end{quote}
According to Weil's interpretation, \emph{approximation} is inherent
to the meaning of the original Greek term \parisotes.

Breger appears to recognize that Leibniz understood Fermat's method to
be based on infinitesimals, but claims that ``the texts available to
Leibniz contained grave contradictions and were hardly suitable for a
clear understanding of the method'' \cite[p.\;24]{Br13}.  Yet Breger
acknowledges that Leibniz may have had access to the 1679 \emph{Varia
Opera} edition of Fermat's collected works (ibid., note 28) and
moreover that Huygens and Leibniz corresponded about the \emph{Opera
Omnia} edition (ibid., note\;31).

\subsection{Reception by Huygens}

The thesis we develop in the present text does not depend on
considering Fermat's~$E$ as necessarily being infinitely small.  Even
if one adopts a purely algebraic interpretation of Fermat's techniques
including quadrature and adequality (as is eminently possible for some
of Fermat's simpler applications of adequality, as per Weil's thesis;
see Section\;\ref{s13b}), one can still ask \emph{why} Fermat, unlike
his contemporaries like Kepler%
\footnote{\label{f8b}Jean d'Espagnet was interested in the work of
Robert Fludd (1574--1637), whom Kepler had attacked.  That alone might
have brought d'Espagnet to Kepler's books.  While it is plausible that
d'Espagnet's library at Bordeaux should have included Kepler's works,
the question of Kepler's possible influence on Fermat's method is a
subject of long-standing scholarly controversy exhaustively covered in
\cite[pp.\;40--60]{Ci90}.  For a summary of Kepler's contribution to
infinitesimal techniques of barrel measuring see \cite{Jo08}.}
and Galileo, chose \emph{not} to work explicitly with the infinitely
small.

Fermat offered little in the way of explanation of his method of
adequality.  What are the reasons for Fermat's evasiveness about the
foundations of his method?  Why did Fermat never clarify whether
his~$E$ were infinitely small or account for some of his methods in
terms of indivisibles, as Kepler, Galileo, and Cavalieri did?  While
presenting an interpretation of Fermat's method at the French academy,
Huygens commented as follows:
\begin{quote}
Fermat est le premier homme que je sache qui ait \'etabli une r\`egle
certaine pour d\'eterminer les valeurs maximales et minimales dans les
questions g\'eom\'etriques.  En en recherchant \emph{le fondement
qu'il n'a pas communiqu\'e}, etc.  (Huygens cited in \cite{NT})
(emphasis added)
\end{quote}
Most commentators agree with Huygens' sentiment.  Huygens continued:
\begin{quote}
\ldots j'ai trouv\'e en m\^eme temps de quelle mani\`ere cette r\`egle
peut \^etre r\'eduite \`a une bri\`evet\'e remarquable, de sorte
qu'elle s'accorde d\'esormais avec celle donn\'ee plus tard par
l'honorable Hudde comme une partie de la r\`egle plus g\'en\'erale et
fort \'el\'egante qui s'appuie sur un tout autre principe.  Cette
derni\`ere a \'et\'e publi\'ee par Fr.\;Van Schooten dans le recueil
qui contient aussi les livres de Descartes sur la g\'eom\'etrie.  Or,
ma m\'ethode d'examiner la r\`egle de Fermat \'etait la suivante\ldots
(Oeuvres compl\`etes, 1940, tome 20).  (ibid.)
\end{quote}
Huygens' observation that Fermat did not provide an explanation of the
foundation of his method is echoed by De Gandt:
\begin{quote}
Fermat et Roberval, pour des raisons tr\`es diff\'erentes, conservent
une grande partie de leurs tr\'esors dans leurs papiers personnels.
\cite[p.\;104]{De92}
\end{quote}
Concerning Fermat's limited mathematical output, Spiesser writes:
\begin{quote}
[Fermat] s'en explique par le manque de temps, \ldots et par `sa pente
naturelle vers la paresse' (par exemple, Oeuvres II, p.\;461).  Ces
arguments, dont le second rel\`eve quelque peu de la coquetterie, lui
permettent \'egalement d'excuser son manque d'int\'er\^et pour les
expositions d\'etail\-l\'ees des preuves \ldots{} \cite[p.\;297]{Sp16}
\end{quote}
Spiesser's assessment of Fermat's claim of alleged \emph{paresse} as a
kind of \emph{coquetterie} is right on target.  We will therefore seek
deeper reasons for his apparent evasiveness in explaining his method.

\subsection{Our thesis}

We shall argue that Fermat's reticence may have been due either to a
deference to his contemporary catholic theologians including jesuits
who were often opposed to atomism and indivisibles on doctrinal
grounds, or to a possible fear of religious reprisals.  If this is
correct, then Fermat was in similar position to 17th century Italian
mathematicians most of whom forsook indivisibles.%
\footnote{A possible exception is the work of Mengoli,
cf.\;\cite{Ma97}, though Mengoli carefully avoided the language of
indivisibles.}

\subsection{Modern proxies}
\label{s19}

In this and the following sections we outline a modern formalisation
of a relation of adequality and the associated procedures while
keeping in mind the distinction between procedure and ontology
outlined in Section~\ref{f2}.  Readers already familiar with this
set-theoretic justification of an infinitesimal-enriched continuum can
skip to Section~\ref{ias}.

We start with a construction (called an \emph{ultrapower}) of a
hyperreal extension~$\R\hookrightarrow\astr$.  Let~$\R^{\N}$ denote
the ring of sequences of real numbers, with arithmetic operations
defined termwise.  Then we have a totally ordered
field~$\astr=\R^{\N}\!/\text{MAX}$ where ``MAX'' is a suitable maximal
ideal.  Elements of~$\astr$ are called hyperreal numbers.  Note the
formal analogy between the quotient~$\astr=\R^{\N}\!/\text{MAX}$ and
the construction of the real numbers as equivalence classes of Cauchy
sequences of rational numbers.  In both cases, the subfield is
embedded in the superfield by means of constant sequences, and the
ring of sequences is factored by a \emph{maximal ideal}.

We now describe a construction of such a maximal ideal of $\R^\N$
exploiting a suitable finitely additive
measure~$\xi\colon\mathcal{P}(\N)\to\{0,1\}~$ (thus~$\xi$ takes only
two values,~$0$ and~$1$) taking the value~$1$ on each cofinite set,%
\footnote{For each pair of complementary \emph{infinite} subsets
of~$\N$, such a measure~$\xi$ ``decides'' in a coherent way which one is
``negligible'' (i.e., of measure~$0$) and which is ``dominant''
(measure~$1$).}
where~$\mathcal{P}(\N)$ is the set of subsets of~$\N$.  The ideal MAX
consists of all ``negligible'' sequences~$(u_n)$, i.e., sequences
which vanish for a set of indices of full measure~$\xi$, namely,
$\xi\big(\{n\in\N\colon u_n=0\}\big)=1$.  The
subset~$\mathcal{U}=\mathcal{U}_\xi\subseteq\mathcal{P}(\N)$
consisting of sets of full measure~$\xi$ is called a free ultrafilter
(these can be shown to exist using Zorn's lemma).

\begin{definition}
The \emph{order} on~$\astr$ is defined by setting~$[(u_n)]<[(v_n)]$ if
and only if~$\xi(\{n\in\N\colon u_n<v_n\})=1$ or
equivalently~$\{n\in\N\colon u_n<v_n\}\in\mathcal{U}$.
\end{definition}

\begin{example}   
An element~$x\in\R$ is embedded in~$\astr$ by means of the constant
sequence with general term~$x$.  Let~$v=[(v_n)]\in\astr$.  Then~$x$
satisfies~$x<v$ if and only if~$\{n\in\N\colon x<v_n\}\in\mathcal{U}$.
\end{example}

For a broader view of Robinson's framework see \cite{17f}.  Minimal
set-theoretic requirements for constructing a \emph{definable}
hyperreal line are analyzed in \cite{18c}.

\subsection{Infinitesimals, adequality, and standard part}
\label{ias}

Based on the set-theoretic construction of a hyperreal extension
$\R\hookrightarrow\astr$ given in Section~\ref{s19}, we introduce some
terminology for dealing with infinitesimals.

\begin{definition}
An element~$E\in\astr$ is called \emph{infinitesimal} if for each
positive~$r\in\R$ one has~$-r<E<r$.
\end{definition}

\begin{definition}
Hyperreal numbers~$a,b$ are said to be infinitely close, written
\[
a\approx b,
\]
if their difference~$a-b$ is infinitesimal.
\end{definition}

It is convenient also to introduce the following terminology and
notation.  We will use Leibniz's notation~$\adequal$.%
\footnote{Leibniz actually used a symbol that looks more like~$\sqcap$
(see \cite{13f}) but the latter is commonly used to denote a product.}
Leibniz used the symbol to denote a notion of \emph{generalized
equality} ``up to'' a negligible term, though he did not distinguish
it from the usual symbol~``$=$'' which he also used in the same sense.

\begin{definition}
\label{d356}
Hyperreal numbers~$a,b$ are said to be \emph{adequal}, written
\[
a \adequal b,
\]
if either~$\frac{a}{b}\approx 1$ or~$a=b=0$.
\end{definition}

\begin{example}
We have $\sin x\adequal x$ for infinitesimal $x$. While the
relation~$\sin x\approx x$ for infinitesimal~$x$ is immediate from the
continuity of sine at the origin (in fact both sides are infinitely
close to~$0$), the relation~$\sin x\adequal x$ is a subtler relation
equivalent to the computation of the first order Taylor approximation
of sine.
\end{example}

The crucial observation is that unlike the relation~$\approx$, the
relation~$\adequal$ has the following property of
\emph{multiplicative} invariance.

\begin{theorem}
\label{t15}
The relation~$\adequal$ is multiplicatively invariant in the sense
that if polynomials (or more general expressions)~$P$ and~$Q$
satisfy~$P\adequal Q$ then also~$\frac{P}{E}\adequal\frac{Q}{E}$.
\end{theorem}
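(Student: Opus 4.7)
The plan is to argue directly from Definition~\ref{d356} by a case analysis on which of the two clauses establishes $P\adequal Q$. Throughout I tacitly assume that $E$ is a nonzero hyperreal (in the intended setting, a nonzero infinitesimal), since otherwise the quotients $P/E$ and $Q/E$ are not defined.

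First I would dispose of the degenerate case: if $P=Q=0$, then $P/E=Q/E=0$, so the second clause of Definition~\ref{d356} applies directly and $P/E\adequal Q/E$ is immediate.

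In the remaining case the first clause of Definition~\ref{d356} gives $P/Q\approx 1$; in particular $Q\neq 0$, and so $Q/E\neq 0$ as well. The entire argument then rests on the trivial algebraic identity
\[
\frac{P/E}{Q/E}=\frac{P}{Q}\approx 1,
\]
which once again yields $P/E\adequal Q/E$ via the first clause of Definition~\ref{d356}. No appeal to the ultrapower construction from Section~\ref{s19} is needed; the statement is a purely formal consequence of the ratio-based definition.

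There is no real technical obstacle, but the point worth emphasizing is precisely why $\adequal$ enjoys this multiplicative invariance while the coarser relation $\approx$ does not. The defining condition $P/Q\approx 1$ is manifestly invariant under replacing the pair $(P,Q)$ by $(\lambda P,\lambda Q)$ for any nonzero hyperreal $\lambda$, whereas the additive condition $a-b\approx 0$ is not scale-invariant: for a nonzero infinitesimal $E$ one has $E\approx 2E$, yet dividing through by $E$ yields $1\not\approx 2$. This scale-invariance is what legitimates cancelling the factor $E$ in Fermat's procedure, and it is why $\adequal$, rather than $\approx$, is the appropriate modern proxy for Fermat's relation.
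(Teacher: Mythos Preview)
Your proof is correct and follows essentially the same route as the paper's: the paper simply observes that $\frac{P/E}{Q/E}=\frac{P}{Q}\approx 1$. Your version is a bit more careful in that you explicitly handle the degenerate case $P=Q=0$ and note the requirement $E\neq 0$, and your closing remark contrasting $\adequal$ with $\approx$ anticipates exactly the point the paper makes in Remark~\ref{r16}.
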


\begin{proof}
If~$\frac{P}{Q}\approx 1$ then
$\frac{P/E}{Q/E}=\frac{EP}{EQ}=\frac{P}{Q}\approx 1$, as well.
\end{proof}

An element~$u\in\astr$ is called \emph{finite} if~$-r<u<r$ for a
suitable~$r\in\R$.  Let~$\hr\subseteq\astr$ be the subring consisting
of finite elements of~$\astr$.

\begin{theorem}
\label{t17}
There exists a function~$\st\colon\hr\to\R$ called \emph{the standard
part} that rounds off each finite hyperreal~$u$ to its nearest real
number~$u_0\in\R$, so that~$u_0=\st(u)$ and~$u\approx u_0$.
\end{theorem}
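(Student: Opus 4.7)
The plan is to use Dedekind completeness of $\R$ to pick out the required real number, and then verify that it is infinitely close to $u$. Given a finite hyperreal $u\in\hr$, I would consider the cut
\[
L_u = \{r\in\R \colon r<u\} \subseteq \R,
\]
where the inequality is interpreted via the embedding $\R\hookrightarrow\astr$ and the hyperreal order. Finiteness of $u$ provides some $M\in\R$ with $-M<u<M$, so $L_u$ contains $-M$ and is bounded above by $M$. By the completeness of $\R$, the supremum $u_0 := \sup L_u$ exists in $\R$, and I would set $\st(u):=u_0$.

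Next I would check that $u\approx u_0$. Suppose for contradiction that $u-u_0$ is not infinitesimal; then there is a positive $r\in\R$ with either $u-u_0>r$ or $u_0-u>r$. In the first case, $u_0+r/2\in\R$ satisfies $u_0+r/2<u$, so $u_0+r/2\in L_u$, contradicting the fact that $u_0$ is an upper bound for $L_u$. In the second case, $u_0-r/2$ is a real upper bound for $L_u$ strictly smaller than $u_0$, again contradicting the defining property of the supremum. Hence $u-u_0$ is infinitesimal, i.e.\ $u\approx u_0$.

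Well-definedness (uniqueness of the nearest real) is then a short argument: if $u\approx u_1$ for some $u_1\in\R$, the difference $u_0-u_1=(u_0-u)+(u-u_1)$ is a sum of two infinitesimals, hence infinitesimal; but a real infinitesimal must be $0$, so $u_1=u_0$.

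The main obstacle, and the only nontrivial ingredient, is the first step: one cannot extract the real $u_0$ from the ultrapower construction of $\astr$ by itself, because $\astr$ is not order-complete (the halo of any finite hyperreal has no supremum in $\astr$). The essential input is the Dedekind completeness of $\R$, used to locate the cut determined by $u$; every remaining step is order-theoretic bookkeeping exploiting the archimedean fact that between any two distinct reals there lies a positive real distance.
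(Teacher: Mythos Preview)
Your argument is correct and follows essentially the same route as the paper: both proofs use the Dedekind completeness of~$\R$ by letting a finite~$u$ determine a cut and taking the associated real number as~$\st(u)$. The paper phrases this as a cut on~$\Q$ (and notes the argument works for any ordered field extension of~$\R$), while you take the supremum in~$\R$ of~$\{r\in\R:r<u\}$; your version is more detailed in verifying~$u\approx u_0$ and uniqueness, but the underlying idea is identical.
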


\begin{proof}
The result holds generally for an arbitrary ordered field
extension~$F$ of~$\R$.  Indeed, if~$x\in F$ is finite, then~$x$
induces a Dedekind cut on the subfield~$\Q\subseteq \R \subseteq F$
via the total order of~$F$.  The real number corresponding to the
Dedekind cut is then infinitely close to~$x$.
\end{proof}

\begin{example}
A number~$E$ is infinitesimal if and only if~$\st(E)=0$.
\end{example}

\subsection{Transcription of Fermat's argument}
\label{s110}

Fermat illustrates his method by solving the following problem:
subdivide a given segment into two subsegments so as to maximize the
area of the rectangle formed by the two subsegments
\cite[p.\;134]{Ta1}.

Fermat denotes the length of the original segment by~$B$ and the
length of a first subsegment by~$A$, so that the second subsegment is
of length~$B-A$.  The expression to be maximized is
\[
BA-A^2,
\]
where we used the Cartesian notation for arithmetic operations for
greater readability.  Fermat now replaces the first subsegment
by~$A+E$ so that the second one becomes~$B-A-E$, with
product~$BA-A^2+BE-2AE-E^2$.  Fermat compares the latter to the
previous product~$BA-A^2$ and suppresses common terms.

\begin{remark}
\label{r19}
At the next stage, Fermat gathers together terms with of same sign,
namely he groups the positive terms together and the negative terms
together.
\end{remark}

This enables him to form the \emph{adequality}
\begin{equation}
BE \text{ adaequabitur } 2AE+E^2,
\end{equation}
Dividing out by~$E$, Fermat obtains the \emph{adequality}
\begin{equation}
B \text{ adaequabitur } 2A+E.
\end{equation}
Suppressing the remaining term~$E$, Fermat obtains an \emph{equality}
\begin{equation}
B \text{ aequabitur } 2A.
\end{equation}
Fermat concludes that to solve the problem one needs to take (for~$A$)
half of~$B$.  
\begin{remark}
\label{r110}
This is one of many instances when the characteristic pair
``adequality, equality'' (in that order) occurs in Fermat's
description of his method.
\end{remark}

Let us transcribe Fermat's argument in the language developed in
Section~\ref{ias} by rewriting his sequence of operations in modern
notation as follows:
\begin{equation}
\label{e11}
BE\adequal 2AE+E^2,
\end{equation}
\begin{equation}
\label{e12}
B\adequal 2A+E,
\end{equation}
\begin{equation}
\label{e12b}
B=2A.
\end{equation}
In terms of modern proxies, the passage from relation~\eqref{e11} to
relation~\eqref{e12} is justified by Theorem~\ref{t15}.  The passage
from relation~\eqref{e12} to equality~\eqref{e12b} is justified by
applying the standard part function of Theorem~\ref{t17} using the
fact that~$\st(E)=0$.

\begin{remark}
\label{r16}
The passage from equation~\eqref{e11} to equation~\eqref{e12} enabled
by Theorem~\ref{t15} is only possible if one works with a
\emph{multiplicatively} invariant relation~$\adequal$ as we did,
rather than with the relation of infinite proximity~$\approx$ (see
Section~\ref{ias}).
\end{remark}

\subsection{Text sent to Descartes}
\label{s21}

At the outset of a text on Fermat's method sent to Descartes toward
the end of 1637 and received by the latter at the beginning of 1638,
Fermat starts with a term (expression) containing an unknown~$A$.%
\footnote{\label{f13c}In \emph{Ad eamdem methodum} (item III in
\cite{Ta1}), Fermat refers to~$A$ as \emph{incognitam} (unknown)
\cite[p.\;140]{Ta1}.  A text starting with the words ``Je veux par ma
m\'ethode'' is a translation (of the above) dating from 1638 in old
French first published in \cite{Fe22}.  This text refers to~$A$ as an
\emph{inconnue} \cite[p.\;74, line\;7]{Fe22} and similarly at
\cite[p.\;80, line\;4]{Fe22}.  Fermat similarly describes~$A$ as
\emph{inconnue} in a 22~october 1638 letter to Mersenne \cite[p.\;170,
175]{Ta2}.}
Fermat goes on to substitute~$A+E$ in place of~$A$, and writes:
\begin{quote}
Adaequentur, ut loquitur Diophantus, duo homogenea maximae aut minimae
aequalia \cite[p.\;133]{Ta1}
\end{quote}
(see Section~\ref{f14} for details and translation).  Following the
steps outlined in Section~\ref{s2} (and in more detail in
Section~\ref{s110}), Fermat concludes: 
\begin{quote}
The solution of this last equality will give the value of~$A$, which
known, the maximum or minimum will become known by repeating the
traces of the foregoing resolution. \cite[p.\;162]{Ma94}
\end{quote}
It is only at this stage in the algorithm that the \emph{unknown}~$A$
becomes a \emph{constant}~$A$, namely the desired point of maximum.%
\footnote{Thus Breger's claim to the effect that ``in the first text
on the method sent to Descartes, adaequare means equality''
\cite[p.\;21]{Br13} has no basis; see Section~\ref{s13}.}

\subsection{Original Latin}
\label{f14}

The original Latin of the passage on Diophantus cited in
Section~\ref{s21} reads as follows:
\begin{quote}
Adaequentur, ut loquitur Diophantus, duo homogenea maximae aut minimae
aequalia et, demptis communibus (quo peracto, homogenea omnia ex parte
alterutra ab~$E$ vel ipsius gradibus afficiuntur), etc.
\cite[p.\;133]{Ta1}.
\end{quote}
Mahoney translates this passage as follows: 
\begin{quote}
\emph{Adequate},%
\footnote{Mahoney is using \emph{adequate} as an English verb;
cf.\;note~\ref{f5}.}
as Diophantus says, the two homogeneous expressions equal to the
maximum or minimum and, having removed common terms (which done, all
homogeneous quantities on either side will contain~$E$ or degrees of
it), etc.''  \cite[p.\;162]{Ma94}
\end{quote}
Note that Fermat speaks of two homogeneous expressions equal to the
(i.e., representing the unknown) maximum.  He does not speak of two
homogeneous expressions equal to each other.%
\footnote{This is contrary to Breger's claim at \cite[p.\;28,
line\;3]{Br13}.  Breger reproduces only a misleadingly truncated
fragment of Fermat's Latin phrase, in footnote~50 there to support his
dubious claim.  Breger's fragment is ``duo homogenea maximae aut
minimae aequalia.''  However, Breger's fragment does not make sense
grammatically, because the words ``maximae/minimae" only make sense in
Fermat's full phrase as \emph{Datives} construed with ``aequalia",
namely ``two homogeneous terms equal to the maximum or the minimum".
The two homogeneous expressions are not equated but rather
\emph{adequated} by Fermat.  In the 1638 text mentioned in
note~\ref{f13c} Fermat writes (in old French) explicitly: ``comme
s'ils estoient esgaux, bien qu'en effect ils ne le soient pas''
\cite[p.\;74, lines\;15--16]{Fe22}.}

\subsection{Fermat's \emph{Sur la m\^eme m\'ethode}}
\label{tsi}

Fermat's Latin text \emph{Ad eamdem methodum} (item VI in \cite{Ta1})
starts at \cite[p.\;158]{Ta1}.  We will use Tannery's translation
\emph{Sur la m\^eme m\'ethode} for the reader's convenience.  Here we
find:
\begin{quote}
Nous consid\'erons en fait dans le plan d'une courbe quelconque deux
droites donn\'ees de position, dont on peut appeler l'une
\emph{diam\`etre}, l'autre \emph{ordonn\'ee}.  Nous supposons la
tangente d\'ej\`a trouv\'ee en un point donn\'e sur la courbe, et nous
consid\'erons par \emph{ad\'egalit\'e} la propri\'et\'e sp\'ecifique
de la courbe, non plus sur la courbe m\^eme, mais sur la tangente \`a
trouver.  En \'eliminant, suivant notre th\'eorie des maxima et
minima, les termes qui doivent l'\^etre, nous arrivons \`a une
\emph{\'egalit\'e} qui d\'etermine le point de rencontre de la
tangente avec le diam\`etre, par suite la tangente elle-m\^eme.%
\footnote{Latin original: ``Consideramus nempe in plano cujuslibet
curvae rectas duas positione datas, quarum altera diameter, si libeat,
altera applicata nuncupetur.  Deinde, jam inventam tangentem
supponentes ad datum in curva punctum, proprietatem specificam curvae,
non in curva amplius, sed in invenienda tangente, per adaequalitatem
consideramus et, elisis (quae monet doctrina de maxima et minima)
homogeneis, fit demum aequalitas quae punctum concurs\^us tangentis
cum diametro determinat, ideoque ipsam tangentem''
\cite[p.\;159]{Ta1}.}
\cite[p.\;141]{Ta3} (emphasis on \emph{diam\`etre} and
\emph{ordonn\'ee} in the 1896 source; emphasis on \emph{ad\'egalit\'e}
and \emph{\'egalit\'e} added)
\end{quote}
Here similarly, adequality refers to \emph{unknowns}, whereas equality
refers to the value of the \emph{constant} found.  Thus Fermat is
expressing distinct ideas when he uses the words \emph{adaequalitas}
and \emph{aequalitas}.  The characteristic pair ``adequality,
equality'' occurs here as elsewhere in that specific order; see
Remark~\ref{r110} on page \pageref{r110}.

\medskip\noindent [\textbf{Note added after publication:} After the
article was published online (see
\url{http://dx.doi.org/10.1007/s10699-017-9542-y}) at
\emph{Foundations of Science}, we came across an important piece of
evidence concerning Fermat's Toulouse friend Lalouv\`ere, jesuit,
mathematician, and \emph{censor}.  Indeed, Antonella Romano writes:
``Un premier constat regarde les j\'esuites confront\'es \`a la
censure: sur tous les cas du Fondo Gesuitico qui concernent la France,
deux professeurs seulement appartiennent \`a la liste \'etablie dans
le cadre de cet ouvrage, B. Labarthe et V. L\'eotaud.  Si d'autres
math\'ematiciens y apparaissent, c'est tout aussi exceptionnellement,
et au titre de censeur, comme Antoine Lalouv\`ere$^{114}$''
\cite[p.\;512]{Ro99}.  Footnote 114 there reads: ``C'est lui qui porte
un jugement n\'egatif sur l'ouvrage de B. Labarthe.''  Thus,
Lalouv\`ere in his capacity as a censor sank at least one book, namely
that by his fellow jesuit Labarthe.]

\section{Reductive readings}

Reductive tendencies in modern historiography of mathematics were
analyzed in \cite{12d}; see also Section~\ref{s27}.  Some Fermat
scholars have argued recently that \emph{adequality} meant
\emph{equality} or \emph{setting equal}, that Fermat's procedure was
purely algebraic, and that no idea of approximation was involved.

\subsection{Reductive readings of adequality: Breger}
\label{s13}

Breger's reading along these lines was critically analyzed in
\cite{13e}.  That such scholars are in a distinct minority emerges
from the following comment of Breger's:
\begin{quote}
Although there are considerable differences between various
interpretations, there seems to be a common dogma, to which all
interpreters, as far as I know, agree, namely: \emph{Fermat uses the
word `adaequare' in the sense of `to be approximately equal' or `to be
pseudo-equal' or `to be counterfactually equal'} \cite[p.\;194]{Br94}.
(emphasis in the original)
\end{quote}
Why is it that ``all interpreters,'' as reported by Breger, agree that
a relation more general than equality and exploiting in some way
\emph{in}equality is involved in Fermat's method?  Possibly it is
because Fermat himself said so in a letter to Mersenne:
\begin{quote}
Cette comparaison par \emph{ad\'egalit\'e} produit deux termes
\emph{in\'egaux} qui enfin produisent l'\'egalit\'e (selon ma
m\'ethode), qui nous donne la solution de la question
\cite[p.\;137]{Fe38} (emphasis on \emph{ad\'egalit\'e} in the
original; emphasis on \emph{in\'egaux} added)
\end{quote} 
(see Section~\ref{s22} for details).  Similar remarks apply to
Fermat's later missive to Descartes mentioning \emph{ad\'equation} and
\emph{in}equalities involved; see \cite[p.\;155]{Ta2}.  Meanwhile,
Breger claims that
\begin{quote}
two homogeneous quantities that are each equal to the maximum or
minimum are obviously equal to each other and not only approximately
equal \cite[p.\;21, note\;6]{Br13}.
\end{quote}
A similar claim had already appeared in \cite[p.\;195]{Br94}.
Breger's inference is based on assuming that the symbol~$A$ appearing
in Fermat's argument (see Section~\ref{s21}) is a \emph{constant}
rather than an \emph{unknown} throughout the argument, but~$A$ is
clearly an unknown at the outset of the argument and only becomes a
constant at the end, as analyzed in Sections~\ref{s21} and~\ref{tsi}.
Thus Breger's inference is based on a confusion of unknowns and
constants.

\subsection{Fermat's letter; \emph{enfin}}
\label{s22}

For the sake of completeness we reproduce the full passage from
Fermat's letter to Mersenne cited in Section~\ref{s13}, which uses the
term \emph{ad\'egalit\'e} twice:

\begin{quote}
6. Outre le papier envoy\'e \`a R(oberval) et P(ascal), pour
suppl\'eer \`a ce qu'il y a de trop concis, il faut que M.\;Descartes
sache, qu'apr\`es avoir tir\'e la parall\`ele qui concourt avec la
tangente et avec l'axe ou diam\`etre des lignes courbes, je lui donne
premi\`erement le nom qu'elle doit avoir comme ayant un de ses points
dans la tangente, ce qui se fait par la r\`egle des proportions qui se
tire des deux triangles semblables.  Apr\`es avoir donn\'e le nom,
tant \`a notre parall\`ele qu'\`a tous les autres termes de la
question, tout de m\^eme qu'en la parabole, je consid\`ere derechef
cette parall\`ele, comme si le point qu'elle a dans la tangente
\'etoit en effet en la ligne courbe, et suivant la propri\'et\'e
sp\'ecifique de la ligne courbe, je compare cette parall\`ele par
\emph{ad\'egalit\'e} avec l'autre parall\`ele tir\'ee du point donn\'e
\`a l'axe ou diam\`etre de la ligne courbe.  Cette comparaison par
\emph{ad\'egalit\'e} produit deux termes in\'egaux qui \emph{enfin}
produisent l'\'egalit\'e (selon ma m\'ethode), qui nous donne la
solution de la question.  \cite[p.\;137]{Fe38} (emphasis on
\emph{enfin} added)
\end{quote}
The adverb \emph{enfin} indicates that something \emph{novel} occurs
at the end of the argument that hasn't been the case earlier: namely,
\emph{equality}.  In the same letter Fermat makes (only a half-)joking
reference to ``ma petite guerre contre M.\;Descartes'' (ibid., item
5); one has to assume that under the circumstances Fermat would have
endeavored to provide his best available explanation of the method.

\subsection{Did Tannery tamper with Fermat's manuscript?}
\label{f9b}

A facsimile of a page from Fermat's handwritten manuscript starting
with the words \emph{Doctrinam tangentium} is reproduced between pages
xviii and\;xix in \cite{Ta1}.  Here Fermat used the term
\emph{adaequalitatem} once and, two lines later, the term
\emph{aequalitas} once.  The characteristic pair ``adequality,
equality'' (see Remark~\ref{r110}) occurs numerous times throughout
Fermat's writings on the method.  Tannery's transcription of Fermat's
page appears at \cite[pp.\;158--159]{Ta1} as part of a text entitled
\emph{Ad eamdem methodum} (item VI in Tannery's edition).  The terms
in question appear at lines\;14 and 16 of page 159 of the
transcription.  An editorial note at \cite[p.\;426]{Ta1} deals with
Tannery's transcription.  The note concerning line 14 states the
following: ``14\;adaequalitatem] aequalitatem \emph{Va}''
\cite[p.\;426]{Ta1}.  Here the abbreviation \emph{Va} refers to the
edition published by Samuel Fermat in 1679 entitled \emph{Varia Opera}
\cite{Fe79}, as explained earlier on the same page\;426.  The note
indicates that Tannery corrected an error that crept into the
\emph{Varia Opera} edition of 1679 (which replaced Fermat's term
\emph{adaequalitatem} by \emph{aequalitatem}).

We see therefore that Tannery did not attempt to ``repair'' Fermat's
handwritten manuscript.  Tannery's transcription, faithful to the
original, uses \emph{adaequalitatem} once at line\;14 there, and
\emph{aequalitas} once at line\;16.%
\footnote{For the sake of completeness we reproduce lines 14, 15, 16,
and 17 exactly as they appear in \cite[p.\;159]{Ta1}: \hfill\hfill
\linebreak (14) \hbox{in invenienda tangente, per adaequalitatem
consideramus et, elisis} \hfill\hfill \linebreak (15) \hbox{(quae
monet doctrina de maxima et minima) homogeneis, fit demum}
\hfill\hfill \linebreak (16) \hbox{aequalitas quae punctum concurs\^us
tangentis cum diametro determinat,} \hfill\hfill \linebreak (17)
\hbox{ideoque ipsam tangentem.}\hfill\hfill}
Similarly, Tannery's French translation \cite[p.\;141]{Ta3}
(reproduced in Section~\ref{tsi} above) uses \emph{ad\'egalit\'e} once
and \emph{\'egalit\'e} once.

Breger writes:
\begin{quote}
The facsimile (Fermat 1891, after XVIII) gives the impression that the
manuscript was not written in a hurry; there are few additions and
deletions, and the handwriting is fair.  In two passages of the
manuscript, Fermat expresses the same idea in nearly identical words,
but in [sic] the first time he uses the word adaequalitas, whereas in
the second passage the word aequalitas is used (Fermat 1891, 159, 162,
426) \cite[p.\;195]{Br94}.
\end{quote}
Breger goes on to claim that 
\begin{quote}
the editors of the Oeuvres deemed it necessary to repair Fermat's
supposed confusion by changing the aequalitas into another
adaequalitas [but] they nevertheless indicated their change in the
critical apparatus (Fermat 1891, 426) \cite[pp.\;195--196]{Br94}.
\end{quote}
This refers to an edit on page\;162 at line\;22, from
\emph{aequalitas} to \emph{adaequalitas}.  We will discuss Tannery's
edit in more detail in Section~\ref{s24}.

\subsection{Breger's judgment of Tannery}
\label{s24}

Breger juxtaposes his judgment of a change in the 1679 edition with
his judgment of a change in Tannery's 1891 edition in the following
terms:
\begin{quote}
The editors of (Fermat 1679) present a version which has been changed
the other way round: They give twice aequalitas, whereas (Fermat 1891)
gives twice adaequalitas.  \cite[p.\;196]{Br94}.
\end{quote}
Breger's tone is even more strident in his 2013 article:
\begin{quote}
In the only manuscript on the method of maxima, minima and tangents
that has been handed down to us in Fermat's own handwriting,
aequalitas and adaequalitas are used \emph{interchangeably}.
Unfortunately, the editors of the Fermat edition have interfered with
the text to support their own interpretation of the method and they
have declared the text in Fermat's handwriting to be flawed.
\cite[p.\;22]{Br13} (emphasis added)
\end{quote}
Breger's claim that Fermat used aequalitas and adaequalitas
\emph{interchangeably} is unsupported by evidence.  In fact, Fermat
uses them in distinct senses on the page in Fermat's handwriting
reproduced between xviii and xix in \cite{Ta1}, and accurately
transcribed by Tannery on page\;159.  The term \emph{adequality}
always occurs in tandem with \emph{equality} with the former denoting
a comparison of expressions involving \emph{unknowns} and the latter,
an expression for the \emph{constant} providing the final answer.
Whenever Fermat presents a detailed solution, the characteristic pair
``adequality, equality'' invariably appears; see Remark~\ref{r110} on
page \pageref{r110}.

Breger's comment at \cite[p.\;196]{Br94} cited above is equally
critical of the changes by the 1679 editors and by the 1891 editors.
However, the change made by the 1679 edition at \cite[p.\;69]{Fe79}
was an erroneous replacement of \emph{adaequalitatem} by
\emph{aequalitatem}, contrary both to the version found in Fermat's
handwritten manuscript (see Section~\ref{f9b}) and to Fermat's
description of his method in the text sent to Descartes and appearing
at \cite[p.\;133]{Ta1}; see Section~\ref{s21}.

Namely, the edit by the 1679 editors disrupted the characteristic pair
``adequality, equality.''  Such a pair appears in Fermat's detailed
presentation of a solution on numerous occasions, including Fermat's
usage recorded on page 159 (of the 1891 edition) and Fermat's usage
recorded on page 163 (of the 1891 edition) on the cycloid (``curva
Domini de Roberval''); see Section~\ref{s2}.  Therefore the change on
page\;69 of the 1679 edition constituted an editorial error.

Meanwhile, the change made by Tannery in a passage on page\;162,
line~22 replacing \emph{aequalitas} by \emph{adaequalitas} concerns a
\emph{stand-alone} occurrence of the term (rather than as part of the
characteristic pair) in a brief concluding passage, which we reproduce
in translation for the reader's convenience:
\begin{quote}
Enfin, ce qui est le point important, aux arcs de courbes on peut
substituer les longueurs correspondantes des tangentes d\'ej\`a
trouvées, et arriver \`a l'\emph{ad\'egalit\'e},%
\footnote{Some of the extant texts have \emph{\'egalit\'e} instead.}
comme nous l'avons indiqu\'e : on satisfera ainsi facilement \`a la
question.%
\footnote{Latin original: ``et demum (quod operae pretium est)
portiones tangentium jam inventarum pro portionibus curvae ipsis
subjacentis sumantur, et procedat adaequalitas ut supra monuimus:
proposito nullo negotio satisfiet.'' \cite[p.\;162]{Ta1}}
\cite[p.\;143--144]{Ta3}
\end{quote}
Some of the extant texts for this passage have \emph{aequalitas} and
others have \emph{adaequalitas}, and editor Tannery chose to go with
the latter.  Tannery appropriately indicated the existence of
alternative versions in an editorial note at \cite[p.\;426]{Ta1}
concerning page~162, line~22.  Either term would fit here, and the
choice does not affect the interpretation of the method.

Breger implies that the 1679 and the 1891 editors were equally guilty
of tampering with Fermat's original but Breger's judgment of Tannery's
scholarship is biased by Breger's own quest to account for adequality
in terms of the \emph{double root} idea; see Remark~\ref{r23} on page
\pageref{r23}.

\subsection{Cherry-picked numerology}

The numerology of Breger's comment to the effect that ``The editors of
(Fermat 1679) present a version which has been changed the other way
round: They give twice aequalitas, whereas (Fermat 1891) gives twice
adaequalitas'' \cite[p.\;196]{Br94} is surprising if not to say
incomprehensible.  Where does the number~$2$ as in ``[t]hey give twice
aequalitas, whereas (Fermat 1891) gives twice adaequalitas'' come
from?

The handwritten manuscript transcribed at \cite[p.\;158--167]{Ta1}
uses the term \emph{adaequalitatem} (or \emph{adaequetur},
\emph{adaequalitas}, \emph{adaequari}) no fewer than 10 times.%
\footnote{More precisely, the term occurs once on each of the pages
159, 160, 161, three times on 162, twice on 163, and twice on 164.}
Even if one takes off the one occurrence on page 162 that Breger is
unhappy about, that still leaves us with nine occurrences of the term
in Fermat's handwritten manuscript.  

Breger's number~$2$ seems to be cherry-picked so as to attack
Tannery's editing by comparing it to that of the 1679 edition.

\subsection{Breger's wrong move}
\label{s111}

We will now illustrate how a scholar working in an
infinitesimal-\emph{frei} conceptual framework misses important
features of Fermat's method.  Let us consider Breger's transcription
of Fermat's argument presented in Section~\ref{s110}.  Breger writes:
\begin{quote}
The maximum of~$BA-A^2$ is to be found.  Fermat puts
\begin{equation}
\label{e13}
BA-A^2=B(A+E)-(A+E)^2
\end{equation}
\begin{equation}
\label{e14}
2AE+E^2-BE=0
\end{equation}
\begin{equation}
\label{e15}
E(2A+E-B)=0
\end{equation}
\ldots{} The procedure consists of three steps: Firstly, two
functional expressions for the extreme value are equated; then one
divides by~$E$ \ldots{} in the third step~$E$ is put equal to zero.
\cite[p.\;27]{Br13} (equation numbers added)
\end{quote}
Breger mentions the steps involving the division by~$E$ and
setting~$E$ equal to zero, but does not include the resulting
relations, so we will do it for him, following the style of his
formulas~\eqref{e13}, \eqref{e14}, \eqref{e15}:
\begin{equation}
\label{e16}
2A+E-B=0
\end{equation}
and then
\begin{equation}
\label{e17}
2A-B=0.
\end{equation}
What is striking about Breger's paraphrase is its insensitivity to the
details of Fermat's mathematical presentation.  Breger's
relation~\eqref{e13} is true to Fermat but already Breger's next
relation \eqref{e14} betrays Fermat.

\begin{remark}
As is evident from our summary in Section~\ref{s110}, \emph{Fermat
never moved the terms to the left-hand side as Breger did}.  In fact,
Fermat wrote: ``aequentur sane \ldots negata affirmatis''
\cite[p.\;134]{Ta1} which Mahoney translates as ``equate \ldots{} the
negative terms to the positive'' \cite[p.\;162]{Ma94}; see
Remark~\ref{r19}.  Breger's wrong move makes Fermat's procedures
appear unclear or even confused, as we now show.
\end{remark}

The transition from Breger's relation \eqref{e14} to relation
\eqref{e15} is an unproblematic algebraic transformation using the
distributive law.  However, Breger's next step is problematic and in
fact untenable.

\begin{remark}
The transition from Breger's relation \eqref{e15} to Breger's relation
\eqref{e16} can be justified neither in terms of the
relation~$\approx$ nor in terms of the relation~$\adequal$; see
Remark~\ref{r16}.  Nor did Fermat use~$0$ appearing on Breger's
right-hand side.
\end{remark}

If one wishes to interpret relation \eqref{e16} as true equality as
Breger does, the passage from \eqref{e16} to \eqref{e17} becomes
incomprehensible.  More precisely, it forces~$E=0$ and invalidates any
possible passage from \eqref{e15} to \eqref{e16}.

Breger is ``shocked'' at \cite[pp.\;25--26]{Br13} by H\'erigone's
\emph{Cursus mathematicus} which if interpreted literally appears to
present a version of Fermat's method where one patently divides by
zero to obtain the final result.  Yet Breger's own flawed paraphrase
at \cite[p.\;27]{Br13} similarly relies on~$E$ being first nonzero and
then zero.

Breger declares that ``for the sake of clarity a~$\ldots=0$ is
occasionally used, although Fermat himself avoids using it''
\cite[p.\;29]{Br13} but what Breger introduced by tampering with
Fermat's presentation is not clarity but rather confusion.

In 1642, H\'erigone presented the same example while retaining
Fermat's balanced presentation:
\[
2ae+e^2 \quad  2|2 \quad eb,
\]
followed by
\[
2a+ e \quad 2|2 \quad b,
\]
and finally
\[
2a \quad 2|2 \quad b
\]
(here~$2|2$ denotes an equality sign) as found in \cite[p.\;60]{He42}.
To his credit, \cite[p.\;18]{Ba11} treats the same example without
distorting Fermat's method the way Breger does by moving the terms to
the left-hand side.

\begin{remark}
\label{r23}
Why did Breger move the terms to the left-hand side so as to obtain
the formula~$2AE+E^2-BE=0$ as in \eqref{e14}, whereas neither Fermat
nor his popularizer H\'erigone did?  The polynomial~$2AE+E^2-BE$ has a
\emph{double root} when~$A=B/2$.  Breger seeks a uniform
interpretation of Fermat's method in terms of the \emph{double root}
idea.  But this is not Fermat's way.  The double root idea is present
in Fermat's work but the method of adequality is more general and some
of its applications resist Breger's reductive paraphrase.  Imagine
what \mbox{Unguru} might think of such a left-handed move.
\end{remark}

\subsection{Triumvirate bias}
\label{s27}

Carl Boyer referred to Cantor, Dedekind and Weierstrass (CDW) as ``the
great triumvirate'' in \cite{Bo49}.  A hagiographic attitude toward
CDW tends to go hand-in-hand with a distrust of mathematics that CDW
were unable to formalize, such as infinitesimals.

Thus, volume 4 of Fermat's collected works contains the following
dismissive comments by A.\;Aubry concerning Kepler and Cavalieri:
\begin{quote}
[Fermat] semble admettre ainsi, comme Kepler et Cavalieri, l'existence
r\'eelle des infiniment petits, et ce principe que deux quantit\'es ne
diff\'erant que d'un infiniment petit sont \'egales; tandis qu'au
contraire Descartes se fonde sur la th\'eorie des racines \'egales,
c'est-\`a-dire, au fond, sur celle des limites''%
\footnote{From an editorial comment on page 145 in the same volume it
emerges that Aubry developed his remarks (in Note XXV) especially for
the 1912 volume.}
\cite[p.\;225]{Ta4}.
\end{quote}
Aubry clearly has little confidence in ``really existing
infinitesimals'' or for that matter in a relation of generalized
equality between quantities differing by an infinitesimal.

Scholars trained in the traditional Weierstrassian framework based
upon the real continuum tend to privilege such a framework over other
modern frameworks even when the latter are procedurally closer to the
pioneering epoch of analysis.  Breger frequently mentions the
following two points in close proximity to each other:
\begin{enumerate}
\item
the concept of approximate equality, and
\item
the issue of Fermat being unclear, confused, or contradictory.
\end{enumerate}
Such juxtaposition suggests the existence of an implied connection
between the two points in Breger's mind.  Thus, he writes:

\subsubsection{}
``The commonly accepted interpretations of Fermat's method of extreme
values tell us that this is a curious method, based on an approximate
equality and burdened with several \emph{contradictions} within
Fermat's writings.''  \cite[p.\;139]{Br94} (emphasis added)

\subsubsection{}
``Although there are considerable differences between various
interpretations, there seems to be a common dogma, to which all
interpreters, as far as I know, agree, namely: Fermat uses the word
`adaequare' in the sense of `to be approximately equal' \ldots{}
Usually, the explicit or tacit assumption is made that Fermat himself
was somewhat \emph{confused}.''  \cite[p.\;194]{Br94} (emphasis added)

\subsubsection{}
``In two passages of the manuscript, Fermat expresses the same idea in
nearly identical words, but in [sic] the first time he uses the word
adaequalitas, whereas in the second passage the word aequalitas is
used (Fermat 1891, 159, 162, 426). If the dogma of the usual
interpretation is assumed, then this is another example of Fermat's
supposed \emph{confusion}.''  \cite[p.\;195]{Br94} (emphasis added)

\subsubsection{}
``Looking at these oddities, we finally arrive at an alternative and
at a warning.  The alternative is: Either Fermat was pretty
\emph{confused - far more confused} than a mathematician presenting
one of his central ideas is expected to be - or something is
fundamentally wrong with our understanding of Fermat's method of
extreme values.''  \cite[p.\;196]{Br94} (emphasis added)

\medskip
Breger's tone becomes progressively more strident:

\subsubsection{}
``There is no approximate equality involved in Fermat's insight and in
his application of the Diophantus passage.  Thus I would like to
conclude that these words were written by someone who did not really
understand the meaning of Fermat's reference to Diophantus.  \ldots{}
It seems hard to believe that Fermat could have been so
\emph{confused} as to write \emph{this rubbish}.''
\cite[p.\;210]{Br94} (emphasis added)

\subsubsection{}
``The text's \emph{contradictoriness} cannot be remedied by using
words such as `approxiately equal, conterfactually equal, pseudoequal,
limit process, infinite approximation, infinitesimal' in their
interpretation.  If one finds it hard to believe that Fermat had
repeatedly got caught up in \emph{contradictions}, etc.''
\cite[p.\;20]{Br13} (emphasis added)

\medskip
The following passage is particularly revealing of Breger's assumption
that an approximate equality cannot be part of a proper \emph{proof}:

\subsubsection{}
``[Fermat] knew that an approximate equation was different from a
\emph{proven} convergence (even though he did not yet have the word
convergence), and it would be most strange if he had not distinguished
the one from the other."  \cite[p.\;23]{Br13} (emphasis added)

\medskip
These passages juxtaposing the idea of approximation and the idea of
confusion reveal Breger's manifest belief that viewing Fermat as using
any version of \emph{approximate} equality is inevitably tied in with
viewing Fermat as using arguments that were only \emph{approximately}
correct and thus vague or confused.

Scholars trained in the Weierstrassian school tend to assume that any
relation of \emph{approximate equality} necessarily constitutes a
vague or even confused concept from a strictly mathematical viewpoint.
This type of bias results from the fact that no such relation is
available in the context of the real continuum.  However, such
relations are indeed available in infinitesimal-enriched frameworks;
see Section~\ref{s110}.

\subsection{Breger's equivocation}
\label{s28}

Breger's interpretation is based on an equivocation on the meaning of
Fermat's term.  Breger seeks to assign two distinct meanings to the
term \emph{adequality}:
\begin{enumerate}
\item
\label{itemi}
simple coincidence and/or equivalence;
\item
\label{itemii}
``setting equal" in the sense elaborated by Breger in his paper.
\end{enumerate}
Breger's claim that Fermat used the terms \emph{equality} and
\emph{adequality} interchangeably is only plausible if based on the
sense\;\eqref{itemi}.  However, Breger himself explains the term
adequality in the sense\;\eqref{itemii} relying on a theorem he states
at \cite[p.\;29]{Br13} exploiting expressions like~$(f(x,g(x))$ and
bearing a close resemblance to an implicit function theorem; Barner
explicitly admits using such a 19th century result in
\cite[p.\;17]{Ba11}.

\subsection{Fermat versus Breger}

We bring the following comparison to our readers' attention.

\begin{framed}
\setlength{\columnsep}{10pt}
  \begin{multicols}{2}
\noindent\textbf{Fermat to Mersenne:} `Cette comparaison par
\emph{ad\'egalit\'e} produit deux termes in\'egaux qui enfin
produisent l'\'egalit\'e (selon ma m\'ethode), qui nous donne la
solution de la question.' \cite[p.\;137]{Fe38} \vfill\eject
\noindent\textbf{Breger:} `Fermat's method is not to be located in the
region of approximations\ldots It has nothing to do with an
approximate equality, and thus lacks any basis for assumptions about
limit processes or infinitesimals.'  \cite[p.40]{Br13}
  \end{multicols}
\end{framed}

\subsection{A self-inflicted textual difficulty}

In his text \emph{Ad eamdem methodum} (item III in \cite{Ta1}), Fermat
speaks of \emph{comparison by adequality}:
\begin{quote}
Comparanda sunt ergo homogenea notata signo $+ cum$ iis quae notantur
signo $-, et$ iterare \emph{comparationem [adaequalitatem]} oportet
inter $B$ in $Eq$. + $B$ in $A$ in $E$ bis ex una parte, et $A$ in
$Eq$. ter + $Aq$. in $E$ ter + $Ec$. ex altera.
\cite[pp.\;140--141]{Ta1} (emphasis added)
\end{quote}
Note that the brackets around \emph{adaequalitatem} are Tannery's
(they are not to be found in Samuel Fermat's edition
\cite[p.\;66]{Fe79}).  In footnote\;1 on page 141, Tannery notes that
the text may not be authentic since Fermat should have written only
one of the two words, which according to Tannery Fermat used as
synonyms:
\begin{quote}
Le texte v\'eritable est douteux: Fermat n'a d\^u \'ecrire que l'un
des deux mots, \emph{comparationem} ou \emph{adaequalitatem}, qu'il
employait comme synonymes; l'autre serait une glose du copiste ou du
possesseur de l'original.  M\^eme remarque pour \emph{comparatio} et
\emph{adaequalitas}, quatre lignes plus bas.  \cite[p.\;141,
note\;1]{Ta1} (emphasis in the original)
\end{quote}
But in de Waard's volume we find Fermat's French translation of this
text, where Fermat again speaks of \emph{comparison by adequality}:
\begin{quote}
Il faut donc comparer les homog\`enes qui sont marquez du signe $+$
avec ceux qui sont marquez du signe $-$, et faire derechef comparaison
\emph{adaequalitatem} entre $B$ in $Eq$. $+$ $B$ in $A$ in $E$ bis
d'un cost\'e, et $A$ in $Eq$. ter $+$ $Aq$. in $E$ ter $+$ $Ec$. de
l'autre.  \cite[p.\;75]{Fe22} (emphasis in the original)
\end{quote}
This time it is editor de Waard's turn to add a footnote\;1 concerning
alleged textual difficulties, without however adding brackets around
\emph{adaequalitatem}.

In fact there is no difficulty here since Fermat is using
\emph{comparison} in a generic sense of the word, whereas he is using
\emph{adequality} as the specific technical term exploited in his
method, as he writes:
\begin{quote}
\ldots{} et i'ay appel\'e en mon escrit latin cette sorte de
comparaison \emph{adeaqualitatem} \ldots \cite[p.\;74]{Fe22}
\end{quote}
Moreover, in his remarks about Diophantus in \cite[p.\;140]{Ta1}
\cite[p.\;74]{Fe22}, Fermat also uses both words but here Tannery and,
following him, de Waard do not protest.

It seems that recognizing the possibility that Fermat fully intended
to use both words (\emph{comparison} by \emph{adequality}) involves
imagining a mathematically precise notion of generalized equality
different from the generic sense of comparison, an idea apparently not
easy in a world of post-Weierstrassian historiography.

\subsection{Reductive readings of adequality: Felgner}

Ulrich Felgner for his part focuses on examples reducible to a
\emph{polynomial} which is positive in a neighborhood of a zero at
$h=0$, and seeks to account for all these examples based on his
interpretation of \parisotes{} in \cite{Fe16}.  However, he is unable
to account for Fermat's treatment of transcendental curves like the
cycloid, a difficulty one would expect in light of Weil's thesis; see
Section~\ref{s13b}.

\subsection{Reductive readings of adequality: Barner}

In an odd twist of Fermat scholarship, Klaus Barner appears to hold
that he understood Fermat's method better than\ldots{} Fermat himself.
Barner claims that Fermat's attempt to explain his method of
calculating the tangent at a given point of a curve is somewhat
confusing\;(!), that Fermat actually calculates with a \emph{secant
line} which converges to the tangent as the limit line, and that
Fermat's problem is that he calculates with secants \emph{without
being aware of that}; cf.~\cite[pp.\;32--36]{Ba11}.

The problem with Barner's approach is not merely that, as he seems to
acknowledge, there is no evidence for his \emph{secant} hypothesis in
Fermat's texts, but also that the secant approach is known as Jean
Beaugrand's rather than Fermat's, as analyzed
in~\cite[pp.\;64--65]{Strom} and noted in \cite{13e}.  In fact,
Barner's \emph{secant} reading is not new.  Cifoletti writes:
\begin{quote}
[Zeuthen] fait r\'ef\'erence \`a une s\'ecante, (qui approximerait la
tangente).  Cela est surprenant, puisqu'il n'y a aucun texte de Fermat
sur lequel appuyer cette interpretation.  Bien entendu, la version de
Beaugrand suit cette approche \cite[p.\;47]{Ci90}.
\end{quote}
The Zeuthen--Barner reading of Fermat's method has not been accepted
by modern Fermat scholars.

\section{Historical setting}

A review of the historical setting at the time of Fermat's activity in
mathematics will necessarily involve his professional activities as
member of the Parliament of Toulouse.

\subsection{Salient historical points}

The 11 relevant points are as follows.

\subsubsection{Parliament}
The modern connotations of the term \emph{Parliament} should not
mislead one into thinking about 17th-century Toulouse in terms of
modern political concepts.  In the 17th century the doctrine of the
separation of powers was not even conceived, let alone implemented, in
the Western world.  The Parliament of Toulouse was the local judicial,
legislative, and executive power rolled into one, and subordinate only
to the king of France.

\subsubsection{Venality}
Fermat, like just about everybody else who served in the
Parliament, had to pay a hefty fee%
\footnote{Of 43500 livres \cite[p.\;16]{Ma94} in Fermat's case.  At
the time all public functions were sold under certain conditions
\cite{Mo71}.}
for the privilege.

\subsubsection{Counterreformation}
This was the period of counter-reformation, when the
interdenominational strife was at its height.  Fermat was catholic but
his parents had some protestant sympathies.%
\footnote{\label{f8}Fermat's probable mother, Claire \emph{n\'ee} de
Long, came from a Huguenot family from Montauban, a Huguenot
stronghold \cite{Ga01}.  His wife (and distant cousin) Louyse was also
a de Long (\cite{Mo17} prefers the spelling ``Louise'' while
\emph{Codicille au testament de Pierre de Fermat} uses the spelling
``Louyse'' \cite[p.\;347]{Ch67}).  In addition, Fermat had close
protestant friends in the \emph{Chambre d'\'Edit} at Castres, namely
Pierre Saporta and Jacques de Ranchin \cite{Ch67}.  The claim that
Fermat ``was baptized (and most probably born) on 20\;August\;1601 to
Dominique Fermat \ldots{} and his wife Claire, \emph{n\'ee} de Long"
\cite[p.\;15]{Ma94} is incorrect, since we know that in 1603 Dominique
was still married to Fran\c coise Cazeneuve \cite[p.\;172]{Sp08}.
Claire de Long's grandfather Jean de l'Hospital was a Huguenot
expelled from the Parliament for religious reasons; see
Section~\ref{s41}.}
Fermat's ``mixed'' background was to influence the trajectory of his
professional activities as well.

\subsubsection{Castres}
Whereas Fermat was based at Toulouse, he paid frequent visits to the
city of Castres starting in 1638 \cite[p.\;340]{Ch67} with protestant
leanings, in his function as member of the \emph{Chambre de l'\'Edit}%
\footnote{This refers to the \emph{Edit de Nantes} (1598) or the Edict
of Nantes, where a compromise was reached between catholics and
protestants that held until its abolition in 1685 by the \emph{l'Etat,
c'est Moi} (The \emph{State}, it is \emph{Myself}) king who apparently
found no room within either his catholic \emph{self} or his
\emph{State} for protestantism.}
of the Parliament of Toulouse.  This was done in his capacity as
negotiator of catholic/protestant tensions; see Section~\ref{s6}.

\subsubsection{Eucharist and atomism}
One of the stickiest points of the catho\-lic/protestant disagreement
concerns the doctrine of the eucharist (see Section~\ref{s5}).  What
is relevant as far as Fermat's mathematics is concerned is that the
theory of atomism and kindred doctrines were viewed by some catholic
theologians as a threat to their interpretation of the doctrine of the
eucharist; see Section~\ref{s63} on Trent 13.2.

\subsubsection{Atomism and indivisibles}
Perhaps through careless choice of wording by Cavalieri in his
correspondence with Galileo and others, \emph{atomism} was closely
associated with mathematical \emph{indivisibles}.%
\footnote{Lalouv\`ere specifically complained about this choice of
terminology, as noted in\;\cite[p.\;269]{De15}.}
Cavalieri belonged to the order of the jesuats, which was an older
order than the jesuit order and something of a rival.  Jesuit Guldin
attacked Cavalieri both for allegedly plagiarizing Kepler and for an
alleged fundamental incoherence of Cavalieri's technique exploiting
indivisibles.

\subsubsection{Indivisibles and infinitesimals}

Modern scholars distinguish carefully between indivisibles and
infinitesimals but in the 17th century the situation was less
clearcut.  Similarly, there was no clear-cut distinction at the time
between physical indivisibles and mathematical indivisibles.  The
distinction commonly made today follows \cite{Ko54} (among others).
The distinction is that indivisibles are codimension-one entities%
\footnote{This means that the dimension of the entity is one less than
the dimension of the ambient figure.}
whereas infinitesimals are of the same dimension as the entity they
make up (curve, planar region, etc).  The term \emph{infinitesimal}
was coined by either Mercator or Leibniz in the 1670s; see
\cite[p.\;63]{Le99}.  Similarly, there was no clear-cut distinction at
the time between physical indivisibles and mathematical indivisibles.
Festa observes:
\begin{quote}
L'atomisme \'etait m\^el\'e \`a ces discussions car les j\'esuites,
ceux du Collegio Romano surtout, ne faisaient pas grande distinction
entre `milieux g\'eom\'etriques' et `milieux phy\-siques' lorsqu'il
s'agissait de manifester leur opposition \`aà la notion de
`discontinuit\'e' \cite[p.\;103]{Fe91}.
\end{quote}
There was apparently a close connection, in the eyes of the catholic
clergy, among atoms, indivisibles, and the infinitely small.

\subsubsection{Anti-infinitesimal bans}
Festa documents a series of bans issued by the jesuit establishment
against indivisibles, including one in 1632, the year Galileo received
a summons to stand trial over heliocentrism; see \cite{Fe90} and
\cite[p.\;207]{Fe92}.  Only a few years earlier (around 1629) Fermat
developed the technique of adequality at Bordeaux.

\subsubsection{Degli Angeli} 

Cavalieri's student (or student's student via Torricelli) Stefano
degli Angeli was also a member of the order of the jesuats.  He
published many books on indivisibles, and also published several
spirited, and sometimes strongly worded, defenses of indivisibles
against attacks by jesuit scholars like Paul Guldin, Mario Bettini,
and Andr\'e Tacquet \cite[p.\;291]{Re87}.  In 1668 the order of the
jesuats was banned by papal brief, providing essentially no
explanation.  Degli Angeli lived many years afterwards and published
many additional books, but he never published another word on
indivisibles.

\subsubsection{Gregory}

James Gregory was degli Angeli's student during his stay in Padua from
1664 until 1668.  Gregory's books were suppressed in Venice around
1668 or 1669; see Section~\ref{s36}.

\subsubsection{Lalouv\`ere}

Fermat was in contact with the jesuit Antoine Lalou\-v\`ere at
Toulouse who was interested in mathematics (though the latter was
absent from Toulouse between 1632 and 1640).  Starting in 1658 Fermat
helped Lalouv\`ere with part of the latter's submission to the contest
of the \emph{roulette} (to determine the area of regions defined in
terms of the cycloid curve) launched anonymously by Pascal (and
eventually won by Pascal).  Via Lalouv\`ere and other channels, Fermat
would have been aware of the jesuits' position on indivisibles and
related doctrines.

\subsection{The 17th century scientific context}
\label{s3}

Most scholars acknowledge that infinitesimal analysis was a natural
outgrowth of the techniques of indivisibles as developed by Galileo's
student Cavalieri, and in fact Galileo's own work may be closer to the
infinitesimal techniques of their contemporary Kepler.

Earlier authors used other terms.  Thus, Kepler referred to a point of
the circumference of the circle as a \emph{basis quantulacunque}; to a
straight line, as an \emph{areola}; a rotating axis moved
\emph{minimum} about itself, etc.  Yet he meant what we would call
today the infinitely small.  Galileo spoke of \emph{non-quanta}
\cite{Ba14}, \cite{Ba14b}.  Other authors referred either to
\emph{infinite parva} or various translations thereof of type
\emph{infinitely small}, \emph{infiniment petit}, etc.

Ingegno discusses 17th century atomism, vacuum, and the related
theological difficulties in the following terms: 
\begin{quote}
Le vide renvoyait aux positions atomistiques qui, dans l'\'ecole
galil\'eenne, s'appuyaient sur l'enseignement du ma\^\i tre.  Il
s'agit en particulier de l'enseignement dispens\'e par Galil\'ee
d\'ej\`a \^ag\'e\ldots Les discussions europ\'eennes sur le vide ne
pouvaient qu'accentuer les critiques antiaristot\'eliciennes%
\footnote{\label{ari1}Being contrary to Aristotle was viewed as a
serious offense by the catholic hierarchy.  Thus, part of jesuit
Biancani's book was censored on the grounds that ``The addition to
Father Biancani's book about bodies moving in water should not be
published since \emph{it is an attack on Aristotle and not an
explanation of him} (as the title indicates).  Neither the conclusion
nor the arguments to prove it are due to the author, but to Galileo.
And it is enough that they can be read in Galileo's writings.  It does
not seem to be either proper or useful for the books of our members to
contain the ideas of Galileo, especially when they are contrary to
Aristotle.''  (quoted in \cite[p.\;162]{De03})}
et les difficult\'es th\'eologiques qui apporteront bient\^ot des
arguments solides aux adversaires des nouvelles orientations''
\cite[p.\;313--314]{In02}.
\end{quote}
The theological difficulties involved
included the eucharist:
\begin{quote}
nous ne savons pas grand-chose de la d\'efense de l'atomisme tent\'ee
par Rossetti pour surmonter les difficult\'es que cette doctrine
soulevait pour une interpr\'etation traditionnelle du Myst\`ere de
l'Eucharistie.  (ibid.)
\end{quote}
Indivisibles similarly were perceived as a theological threat and
opposed on doctrinal grounds in the 17th century \cite{Fe03}.  The
opposition was spearheaded by clerics and more specifically by the
jesuits.  Tracts opposing indivisibles were composed by jesuits Paul
Guldin, Mario Bettini, and Andr\'e Tacquet \cite[p.\;291]{Re87}.
P.\;Mancosu writes:
\begin{quote}
Guldin is taking Cavalieri to be composing the continuum out of
indivisibles, a position rejected by the Aristotelian orthodoxy as
having atomistic implications. \ldots{} Against Cavalieri's
proposition that ``all the lines" and ``all the planes" are magnitudes
- they admit of ratios - Guldin argues that ``all the lines \ldots{}
of both figures are infinite; but an infinite has no proportion or
ratio to another infinite."  \cite[p.\;54]{Ma96}
\end{quote}
Tacquet for his part declared that the idea of quantity composed of
indivisibles makes war upon geometry to such an extent, that if it is
not to destroy it, it must itself be destroyed; see
\cite[p.\;205]{Fe92}, \cite[p.\;119]{Al14}.

\section{Social, religious, and military 17th century background}

In this section we seek to present the social, religious, and military
background of the first half of the 17th century Italy and France.
This background helps understand the professional and religious
constraints Fermat was operating under.

\subsection{Bans on indivisibles}

In 1632 (the year Galileo received summons to stand trial over
heliocentrism) the Society's Revisors General led by Jakob Bidermann
banned teaching indivisibles in their colleges \cite{Fe90},
\cite[p.\;198, 207]{Fe92}.  The proposition and the ban read as
follows:
\begin{quote}
Continuum permanens potest constare ex solis indivisibilibus physicis,
seu corpusculis atomis, habentibus partes mathematicas, in ipsis
designabiles, etiamsi realiter dicta corpuscula inter se
distinguantur.  Tempus quoque ex instantibus, \& qualitates intensae,
ex solis gradis indivisibilibus constant.

Hanc propositionem arbitramur, non modo repugnare communi Aristotelis
doctrina, sed etiam secundum se esse improbabilem, etc.
\cite[p.\;207]{Fe92}
\end{quote}
This can be translated as follows:
\begin{quote}
A permanent continuum may consist of physical indivisibles, or minimal
particles, alone, which have mathematical parts that can be designated
by themselves: even though in reality the said particles are distinct
from each other.  Time, too, [consists] of instants, and intense
qualities%
\footnote{This refers to a scholastic dispute as to whether qualities
that differ in intensity differ in number: is there a numerical
difference between the anger of a person who is very angry and one who
is only slightly so?}
consist of indivisible grades alone.

We consider that this proposition not only stands in opposition to the
common doctrine of Aristotle,%
\footnote{This passage of the ban alludes to the Aristotelian doctrine
of \emph{hylomorphism}; see note~\ref{ari1}.}
but is also improbable in itself, etc.
\end{quote}
Referring to this ban, Feingold notes:
\begin{quote}
Six months later, General Vitelleschi formulated his strong opposition
to mathematical atomism in a letter he dispatched to Ignace Cappon in
Dole: ``As regards the opinion on \emph{quantity made up of
indivisibles}, I have already written to the Provinces many times that
it is in no way approved by me and up to now I have allowed nobody to
propose it or defend it.''  \cite[pp.\;28--29]{Fe03} (emphasis added)
\end{quote}
To dispel any remaining doubts, Vitelleschi put his foot down:
\begin{quote}
``If it has ever been explained or defended, it was done without my
knowledge. Rather, I demonstrated clearly to Cardinal Giovanni de Lugo
himself that I did not wish our members to treat or disseminate that
opinion.'' (ibid.)
\end{quote}
Indivisibles were placed on the Society's list of \emph{permanently}
banned doctrines in 1651 \cite{He96}.  One might wonder why a society
dedicated to furthering the pope's cause on the battlefield of ideas
would be interested in mathematics in the first place, whether
Euclidean or otherwise.  As Amir Alexander colorfully puts it,
``Ignatius of Loyola, founding father of the Society \ldots was not
enamored of mathematics'' \cite[p.\;52]{Al14}.  The answer, as argued
by Alexander, is Clavius.  It was C.\;Clavius (1538--1612) who
engineered the acceptance of the new calendar by pope Gregory\;13 and
the catholic clergy.  The development of the calendar patently
required a great deal of mathematical competence.  Riding the wave of
that success, Clavius was able to convince the jesuit hierarchy to
incorporate Euclid into their basic curriculum.

\subsection{``Vide int\'egral''}

The holy war led by jesuits Bettini, Bidermann, Grassi, Guldin,
Inchofer, Pallavicino, Tacquet, Vitelleschi, and others to purge Italy
of atoms and indivisibles bore fruit, and by 1700 Italy was a
mathematical desert:
\begin{quote}
en 1700, c'est le vide int\'egral en ce qui concerne la pratique des
math\'ematiques nouvelles en Italie\ldots{} \cite[p.\;183]{Ro91}
\end{quote}
It took a sustained multi-year campaign orchestrated by Leibniz and
mobilizing everybody from Varignon in Paris, Johann Bernoulli in
Basel, the Hanover ambassador Bothmer, to Army Chief Schulenburg in
order to install a leibnizian \emph{oltramontani}, Jakob Hermann and
subsequently Nicolas Bernoulli, in the mathematics chair at Padua,
eventually sparking a mathematical renaissance in Italy:
\begin{quote}
L'en\-seignement d'Hermann \`a la chaire de math\'ematiques de Padoue
est reconstituable gr\^ace aux apports de plu\-sieurs fonds \ldots{}
Les annonces des Rotuli de l'universit\'e concernent les intitul\'es
des cours annuels, qui changent totalement de ce \`a quoi on \'etait
habitu\'e et qui ne s'en tiennent plus aux commentaires traditionnels
de la g\'eo\-m\'etrie d'Euclide.  \cite[p.\;186]{Ro91}
\end{quote}
The jesuits, unfazed, stuck to their Euclidean guns.  The effects of
anti-indivisible bans were still felt in the 18th century, when most
jesuit mathematicians adhered to the methods of Euclidean geometry, to
the exclusion of the new infinitesimal methods:
\begin{quote}
\ldots le grand nombre des math\'ematiciens de [l'Ordre] resta
jusqu'\`a la fin du XVIII$^e$ si\`ecle profond\'ement attach\'e aux
m\'ethodes euclidiennes.  \cite[p.\;77]{Bo27}
\end{quote}

\subsection{Grassi and Guldin}

A critique of Galileo's indivisibles penned by jesuit Orazio Grassi is
described by Redondi as follows:
\begin{quote}
As for light - composed according to Galileo of indivisible atoms,
more mathematical than physical - in this case, logical contradictions
arise.  Such indivisible atoms must be finite or infinite.  If they
are finite, mathematical difficulties would arise.  If they are
infinite, one runs into all the paradoxes of the separation to
infinity which had already caused Aristotle%
\footnote{\label{ari2} See note~\ref{ari1} on the role of
aristotelianism.}
to discard the atomist theory\ldots{} \cite[p.\;196]{Re87}
\end{quote}
This criticism appeared in the first edition of Grassi's book
\emph{Ratio ponderum librae et simbellae}, published in Paris in 1626.
According to Redondi, this criticism of Grassi's
\begin{quote}
exhumed a discounted argument, copied word-for-word from almost any
scholastic philosophy textbook\ldots The Jesuit mathematician [Paul]
Guldin, great opponent of the geometry of indivisibles, and an
excellent Roman friend of [Orazio] Grassi, must have dissuaded him
from repeating such obvious objections.  Thus the second edition of
the \emph{Ratio}, the Neapolitan edition of 1627, omitted as
superfluous the whole section on indivisibles.  \cite[p.\;197]{Re87}
\end{quote}
Grassi and Guldin may have disagreed about the tactics of fighting
indivisibles, but they agreed about the need to do so (see also
Section~\ref{s63}).

\subsection{War and pope}
\label{s39}

There is an aspect of political history that is essential for
understanding the background for the events surrounding Galileo's
trial.  The protestant Swedish king Gustavus Adolphus was routing the
catholic Habsburgs on the battlefields of Germany until the situation
came to a head in 1632 when he seemed to threaten Rome itself.

The pope Urban 8 was faced with a situation close to a coup d'etat
vis-a-vis a group of cardinals led by the Spanish ambassador.  At this
point in time the pope was forced to reorient his policies away from
the French who had been allied with the Swedes, and toward the
Spanish, with the jesuits gaining greatly in influence.

At the same time Galileo's fortunes sank.  The ascendancy of the
jesuits was manifest by the 1633 trial of Galileo (see
Section~\ref{f9}), formerly a prot\'eg\'e of the pope, and the active
suppression of anything related to atomism or indivisibles.  

That this occurred in the first few years of Fermat's creative
mathematical activity may well have influenced his presentation of the
method of maxima and minima and the method of tangents, particularly
in view of sinister tensions with transubstantiation; see
Section~\ref{f13b}.  Such issue may have also influenced the
presentation of James Gregory's and degli Angeli's work, as we argue
in Sections~\ref{s4} and \ref{s36}.

\subsection{Scheiner vs Galileo}
\label{f9}

The trial of Galileo was provoked most likely by a denunciation from
jesuit C.\;Scheiner (see \cite{Fe91}) who could not conceal his glee
at the verdict \cite[p.\;109]{Fe91}.  Assuming Scheiner was the source
of the denunciation, Festa argues against Redondi's thesis that the
atomism/eucharist issue was the hidden agenda of the Galileo trial.
Festa offers two objections among others:
\begin{enumerate}
\item
\label{o1}
given that the G3 denunciation (see Section~\ref{s5}) dates from
around 1624, the Inquisition would not have delayed issuing a summons
until as late as 1632 if the real issue were G3 and atomism/eucharist;
and
\item
\label{o2}
if on the other hand the denunciation originated with Scheiner, the
hidden agenda hypothesis becomes implausible in view of Scheiner's
apparent admiration for Gassendi and his book based on atomist theses.
\end{enumerate}
Both of these objections, however, are answered in Redondi's book
itself.  As far objection~\eqref{o1} is concerned, the proceedings
against Galileo were not initiated earlier because the pope was
opposed to the idea, and the jesuits were too weak to impose them
until the dramatic turn of historical events in 1632 (see
Section~\ref{s39}).  As far as objection~\eqref{o2} is concerned, even
if Scheiner personally were not bothered by atomism, many others in
Rome were (including the author of the EE291; see below), and the
atomism/eucharist issue would have been a natural one to include in
the indictment.

As far as the present article is concerned, what is relevant is that
all agree that
\begin{quote}
les prises de position des j\'esuites contre l'atomisme au
XVII$^{\text{e}}$ si\`ecle n'avaient pas attir\'e jusqu'ici
l'attention de beaucoup d'historiens.  C'est avec la d\'ecouverte du
`G3' et la parution du livre de Redondi qu'un int\'er\^et pour ce
probl\`eme s'est manifest\'e \cite[p.\;116]{Fe91}.
\end{quote}
The subsequent discovery of the EE291 denunciation makes irrelevant
Festa's objection~\eqref{o1} and lends support to Redondi's
hypothesis; see Section~\ref{s63}.

\subsection{Degli Angeli in Italy}
\label{s4}

The jesuat mathematician degli Angeli defended the method of
indivisibles against the criticisms of jesuit scholars.  Less cautious
than Cavalieri, in his rebuttal of Tacquet's criticism degli Angeli
wrote:
\begin{quote}
``\ldots if in order to approve the method of indivisibles, the
composition of the continuum from indivisibles is necessarily
required, then certainly this doctrine [i.e., continuum made out of
indivisibles] is only strengthened in our eyes'' (degli Angeli as
translated in \cite[p.\;169--170]{Al14}; cf.\;\cite[p.\;205]{Fe92}).
\end{quote}

Both indivisibles and degli Angeli himself appear to have been
controversial at the time in the eyes of the jesuit order, which on
several occasions banned indivisibles from being taught in their
colleges.  We already mentioned in Section~\ref{s3} that in 1632 (the
year Galileo received summons to stand trial over heliocentrism) the
Society's Revisors General led by Jakob Bidermann banned teaching
indivisibles in their colleges \cite{Fe90}, \cite[p.\;198, 207]{Fe92}.
In 1651 indivisibles were placed on the Society's list of
\emph{permanently} banned doctrines \cite{He96}.

\subsection{Gregory's departure from Italy}
\label{s36}

It seems that James Gregory's 1668 departure from Padua was well
timed.  Indeed, his teacher degli Angeli's jesuat order (to which
Cavalieri had also belonged) was not treated with clemency but on the
contrary suppressed by papal brief of Clement\;9 in the same year.
Alexander describes the suppression as ``stunningly violent''
\cite[p.\;171]{Al14}.  The suppression cut short degli Angeli's output
on indivisibles.  Gregory's own books were suppressed at Venice,
according to a letter from John Collins to Gregory dated 25 november
1669, in which he writes:
\begin{quote}
One Mr.\;Norris a Master's Mate recently come from Venice, saith it
was there reported that your bookes were suppressed, not a booke of
them to be had anywhere, but from Dr.\;Caddenhead to whom application
being made for one of them, he presently sent him one (though a
stranger) refusing any thing for it. \cite[p.\;74]{Tu39}
\end{quote}
In a 1670 letter to Collins, Gregory writes:
\begin{quote}
I shall be very willing ye writ to Dr Caddenhead in Padua, for some of
my books.  In the mean time, I desire you to present my service to
him, and to inquire of him if my books be suppressed, and the reason
thereof.  (Gregory to Collins, St Andrews, march 7, 1670, in Turnbull
p.\;88)
\end{quote}
This passage indicates that Gregory sought an explanation for the
suppression of his books.  We are not aware of any clarification he
may have received on this issue.  In a letter to Gregory, written in
London on 29~september 1670, Collins wrote:
\begin{quote}
Father Bertet%
\footnote{\label{f13}Jean Bertet (1622-1692), jesuit, quit the Order
in\;1681.  In 1689 Bertet conspired with Leibniz and Antonio
Baldigiani in Rome to have the ban on Copernicanism
lifted. \cite{Wa12}}
sayth your Bookes are in great esteeme, but not to be procured in
Italy. (Turnbull p.\;107)
\end{quote}
The publishers' apparent reluctance to get involved with Gregory's
books may also explain degli Angeli's silence on indivisibles
following the suppression of his order, but it is hard to say anything
definite in the matter until the archives at the Vatican dealing with
the suppression of the jesuat order are opened to independent
researchers.  Certainly one can understand Gregory's own caution in
the matter of exploiting the infinitely small.  Nonetheless, Gregory
exploited some identifiably infinitary procedures in his work; see
\cite{17b}.

Beyond the borders of a catholic Italy, John Wallis introduced the
symbol~$\infty$ for an infinite number in his book \emph{Arithmetica
Infinitorum} \cite{Wa56}, and exploited an infinitesimal number of the
form~$\frac{1}{\infty}$ in area calculations \cite[p.\;18]{Sc81}, over
a decade before the publication of Gregory's \emph{Vera Circuli}.  At
about the same time, Barrow ``dared to explore the logical
underpinnings of infinitesimals:''
\begin{quote}
Barrow, who dared to explore the logical underpinnings of
infinitesimals, was certainly modern and innovative when he publicly
defended the new mathematical methods against Tacquet and other
mathematical ``classicists'' reluctant to abandon the Aristotelian
continuum.  And after all, to use historical hindsight, it was the
non-Archimedean structure of the continuum linked to the notion of
infinitesimal and advocated by Barrow that was to prove immensely
fruitful as the basis for the Leibnizian differential
calculus. \cite[p.\;244]{Ma89}.
\end{quote}
On the other side of the Channel, Wallis and Barrow were freer than
degli Angeli and Fermat to pursue the infinitely small.

The international nature of the jesuit organisation could not but
affect what can or cannot be said in catholic France.  Fermat's friend
Lalouv\`ere at Toulouse was both a jesuit and a mathematician opposed
to indivisibles.  Lalouv\`ere actively interacted with Fermat in
matters mathematical and would surely serve as a reminder should
Fermat think of engaging in activities (such as any explicit
endorsement of Cavalieri's doctrines) the jesuits considered
objectionable.

\subsection{Council of Trent, Galileo, and G3}
\label{s5}

An anonymous denunciation of Galileo, labeled G3, dating from the
1620s and preserved in the Vatican archives specifically connects
Galileo's atomism to an ``error condemned by'' the Council of Trent
Session 13, canon 2.  The G3 document stated:
\begin{quote}
\ldots one will also have to say according to this doctrine that there
are the very tiny particles with which the substance of the bread
first moved our senses, which if they were substantial (as Anaxagoras
said, and this author [i.e., Galileo] seems to allow on page 200, line
28), it follows that in the Sacrament there are substantial parts of
bread or wine, which is the error condemned by the Sacred Tridentine
Council, Session 13, Canon 2.  (G3 cited in \cite[p.\;334]{Re87})
\end{quote}
Having earlier mentioned the atomic theories of Anaxagoras and
Democritus (ancient authorities espousing atomism), the author of G3
goes on to impute to Galileo an alignment with Democritus:
\begin{quote}
Or actually, if they were only sizes, shapes, numbers, etc., as he
[i.e., Galileo] also seems clearly to admit, agreeing with Democritus,
it follows that all these are ac\-cidental modes, or, as others say,
shapes of quantity.  (ibid.)
\end{quote}
This passage indicates that Galileo's science, allegedly following the
atomism of Democritus, was viewed as a threat to canon 13.2 related to
one of the disagreements between Rome and the protestants (see
Section~\ref{s63}), and was therefore controversial.

Galileo held that sensible qualities (color, taste, etc.) existed in
minds, but not in bodies; they were, in Galileo's phrase, ``nothing
but names so far as the object in which we place them is concerned,''
with no necessary connection to properties in bodies.  Meanwhile
catholic doctrine held that a miracle was necessary to preserve the
color, taste, etc. of the host after it had been miraculously changed
to the body of the nazarene.  Grassi pointed out the incompatibility
of the two doctrines.

\subsection{Document EE291 on Galileo's atomism}
\label{s10}

In 1999 Mariano Artigas discovered an additional anonymous
denunciation of Galileo.  The new document is labeled EE291 and dates
from 1632.  The issue is again incompatibility with the eucharist.
The author of EE291 is thought to be the jesuit Melchior Inchofer.
The criticisms voiced in G3 were echoed in Inchofer's 1632
denunciation EE291 in the following terms:
\begin{quote}
2. [Galileo] errs when he says that it is not possible to conceptually
separate corporeal substances from the accidental properties that
modify them, such as quantity and those that follow quantity.  Such an
opinion is absolutely contrary to faith, for instance in the case of
the eucharist, where quantity is not only really distinguished from
substance but, moreover, exists separately. 

\medskip\noindent 3. He errs when he says that taste, smell, and
colour are pure names, etc.  \newline (as cited in \cite{AMS})
\end{quote}
The denunciation was filed in 1632 only a few months before Galileo
received summons to stand trial.

\subsection{Concerning Canon 13.2}
\label{s63}

In 1551 the said canon stipulated:
\begin{quote}
CANON II.  If any one saith, that, in the sacred and holy sacrament of
the eucharist, the substance of the bread and wine remains conjointly%
\footnote{``panis et vini una cum corpore et sanguine'' in the
original Latin.  This is the key term here - this is the doctrine
favored by Luther - \emph{consubstantiation}.  It was favored also by
Scotus and Ockham, who rejected it only because it was inconsistent
with the 1215 Lateran council.  Artigas et al.\;note that ``the
concept of substance [in\;13.2] was borrowed from Aristotelian
philosophy'' \cite{AMS}.  The substance/form dichotomy is the content
of the Aristotelian doctrine of hylomorphism.}
with the body and blood of [the nazarene], and denieth that wonderful
and singular conversion of the whole substance of the bread into the
Body, and of the whole substance of the wine into the Blood--the
species Only of the bread and wine remaining--which conversion indeed
the Catholic Church most aptly calls Transubstantiation; let him be
\emph{anathema}. (Council of Trent, Session 13) (emphasis added)
\end{quote}
The Council of Trent said nothing of mathematical indivisibles, but
the jesuits interpreted 13.2 as anathemizing both physical and
mathematical indivisibles, a distinction they may not have drawn.
Whether they did or not is irrelevant for our purposes.

It is less than apparent why this canon entails the condemnation of
indivisibles, so a word of explanation is in order.  The key to
understanding the conflict between Galilean atoms and 13.2 is the
adverb \emph{conjointly} (``una cum'' in the original), which alludes
to the doctrine of \emph{con}substantiation, one of the alternatives
to transubstantiation.  According to consubstantiation, the doctrine
condemned by Aquinas as heresy but favored by Luther, the substance of
the bread and the substance of the nazarene's body are present
together in the eucharist.  

In transubstantiation, by contrast, the substance of the bread is
changed by a miracle into the substance of the nazarene's body; see
\cite{Mc68} for details.  Now, Galileo's atomism holds that the
sensible qualities of the bread are no more than the effect of the
atoms of the bread acting upon the sense organs; i.e., they are not
qualities of the bread itself.  Since it is inconceivable that the
faithful should be deceived through sensations that correspond to
nothing real in the eucharist, the atomist must insist that the atoms
of bread continue to exist consubstantially, i.e.\;together with the
body of the nazarene, to cause those sensations; see \cite{Ch02} for
details.

Galileo's critic, Grassi, scoffed that for the preservation of mere
\emph{names} no miracle would be required.%
\footnote{Grassi's objections (see \cite[pp.\;100--101]{Fe91}) closely
parallel those contained in G3, leading Redondi to conjecture that
Grassi was in fact the author of\;G3.  Many Galileo scholars, however,
feel that there is insufficient evidence for this, and specifically
reject Redondi's claim of similarity of handwriting.  See
Section~\ref{f9} for further details.}
A condemnation of atomism as subversive of the eucharist was issued by
jesuit Sforza Pallavicino \cite[p.\;115]{Fe91}; \cite[p.\;203]{Fe92}.
A further denunciation was issued in 1632; see Section~\ref{s10}.

\subsection{Old and new heresy}

Incompatibility with canonical doctrine was not merely a theoretical
issue.  Thus, David Derodon's book was burned as the result of a
denunciation and pressure exerted by the jesuits \cite[p.\;274]{Re87}.
Derodon's 1655 book, \emph{Dispute sur l'Eucha\-ristie}, affirmed the
spiritual presence over the real presence of the nazarene (ibid.).
Redondi refers to it as the \emph{old heresy}; this was first codified
in 1079 by the synod of Rome.

Another illustration of the inherent dangers of publication may have
been familiar to Fermat through his friendship with the d'Espagnets.
Jean d'Espagnet collected scientific manuscripts and eventually passed
on his library to his son Etienne.  Jean wrote a book on alchemy under
a pseudonym because in the atmosphere of the counterreformation it may
have been dangerous to publish it under his own name
\cite[p.\;xxii]{Wi99}.  Fermat first became acquainted with the work
of Vieta through this private library in Bordeaux in the late 1620s.
Many of Fermat's manuscripts were deposited with Etienne d'Espagnet
for safekeeping and eventually reached Mersenne.  

Fermat may well have come to the conclusion that publishing books was
too risky, and chose a safer method of disseminating his discoveries
through private letters and manuscripts.  The fate of alchemists
imprudent enough to publish under their real names in the 1620s is
analyzed in \cite{Ka02}.

Atomism and indivisibles were seen by some in the catholic hierarchy,
including Grassi, the author of G3, Inchofer, and others, as a threat
to the said canon~13.2.

\subsection{Reception of Galileo trial in France}
\label{s411}

Lewis's monograph \cite{Le06} analyzes the reception of the Galileo
trial in France, while the monograph \cite{Pa04} presents a broader
European perspective.  The salient points for us concern Fermat's
correspondents in Paris, Carcavy and Mersenne.  Marin Mersenne
contributed in a major way to the diffusion of Galileo's ideas in
France \cite[Chapter\;5]{Le06}; see also \cite[p.\;36]{Pa04}.  Carcavy
sought to help Galileo disseminate his works \cite[Chapter\;4]{Le06};
see also \cite[p.\;38]{Pa04}.

Fermat's letter to Mersenne indicates that Fermat possessed a copy of
Galileo's \emph{Due Nuove Scienze} as early as 10 august 1638
\cite[p.\;137]{Le06}.  Fermat was involved in discussions of Galileo's
science of motion, as we know from a letter from Pardies to Oldenburg:
\begin{quote}
``Father La Loub\`ere came in on this and demonstrated that motion
could perfectly well occur in the hypothesis of Father Cazr\'e
provided that the body did not pass through all degrees of
slowness. In fact he claims that as the weight of a body is determined
at a certain degree of force, this weight also pushes the body
downwards with a certain degree of speed from the beginning of its
fall; and this seemed so reasonable that Mr. Fermat himself found no
fault with it.''  (quoted in \cite[p.\;217]{Pa03})
\end{quote}
Here Father La Loub\`ere is none other than Fermat's friend
Lalouv\`ere, enemy of indivisibles \cite[p.\;267]{De15}, who appears
to have discussed a variety of scientific subjects with Fermat.

Yet another Galileo--Fermat connection is analyzed in \cite{Ro11}.

\section{Fermat at Castres and conclusion}
\label{s6}

Fermat was personally in charge of interdenominational issues as part
of his professional responsibilities at the Parliament of Toulouse:
\begin{quote}
[Fermat] fait partie de la ``Chambre de l'\'Edit'', pr\'evue par
l'\'edit de Nantes pour juger paritairement des litiges entre
catholiques et protestants.  Il fut un homme de compr\'ehension et de
conciliation, au lendemain des guerres de religion.  \cite{Fe02}
\end{quote}
Fermat acted as a representative of the Parliament of Toulouse to the
\emph{Chambre de l'\'Edit} in Castres in charge of conflicts involving
Protestants and Catholics.


Fermat's official position as mediator may help explain his reticence
to comment on issues, like the nature of his~$E$ exploited in the
method of adequality, that could have involved him in controversy over
the canons; see Section~\ref{s5}.

\subsection{Jean de l'Hospital, \emph{(ancient) Conseiller}}
\label{s41}

Jean de l'Hospital was \emph{Conseiller} at the Parliament of Toulouse
until 1562, when he was expelled ``en raison de ses choix religieux''
\cite[pp.\;27--28]{Mo17}.  Following the 1572 Bartholomew massacre
\cite[p.\;42]{Mo17}, he became president of a judicial body
(\emph{chambre mi-partie}) that would move to Castres and eventually
be transformed into the \emph{Chambre de l'\'Edit}.  According to
historians who identify Claire de Long as Pierre de Fermat's mother,%
\footnote{See note~\ref{f8}.}
Jean de l'Hospital was his great-grandfather; see further in
Section~\ref{s44}.

\subsection{Fermat's poem}
\label{f13b}

Fermat kept a safe distance away from the subject of
con/transubstantiation (see Section~\ref{s63}) not only in his
mathematics but also in his poetry.  Fermat's poem was presented at
the Academy of Castres (of which Fermat was not officially a member)
in 1656 \cite[p.\;60]{Mo17}.

Fermat was inspired by Balzac's lucid writings on some arcane points
of religious doctrine to attempt to do the same with regard to the
events surrounding the execution of the nazarene.  The resulting poem
was meant to enlighten and inspire the faithful with regard to this
central event of both catholic and protestant traditions.  It would
therefore have been natural to mention the eucharist as well in the
poem.  The fact that Fermat does not do so directly is significant.

Fermat's 101-line hexameter poem concerning the events leading up to
the execution carefully steers away from the subject of the eucharist,
which is however considered by Fermat's coreligionists to be central
to the events in question.  The execution is evoked at line 36,
followed at lines 55--56 by a flashback to his companions falling
asleep (after the supper when he reportedly uttered the words
\emph{hoc est corpus, etc.}), but there is no mention in the poem of
\emph{hoc est corpus}.  

The related interdenominational tensions are well illustrated by the
following passage penned by anglican archbishop John Tillotson in
1694: ``In all probability those common juggling words of \emph{hocus
pocus} are nothing else but a corruption of \emph{hoc est corpus}, by
way of ridiculous imitation of the [catholic] priests \ldots{} in
their trick of Transubstantiation.''%
\footnote{See also
\url{http://www.etymonline.com/index.php?term=hocus-pocus}}

\subsection{Academy of Castres}

Further details on the Academy of Castres are provided by Spiesser:
\begin{quote}
\ldots c'est surtout \`a Castres que se retrouvent nombre de
correspondants et d'interlocuteurs de Fermat. Peut-\^etre est-ce une
des raisons pour lesquelles le magistrat affectionnait tout
particuli\`erement d'y s\'ejourner. Ce bastion de la religion
r\'eform\'ee en albigeois, au d\'ebut du XVIIe si\`ecle, accueille le
Chambre de l'Edit entre 1632 et 1670; et ce sont les membres de cette
Chambre, parmi lesquelles se trouvent beaucoup de huguenots, qui ont
constitu\'e les forces vives de l'Acad\'emie fond\'ee en 1648 par Paul
Pellisson, et dont les ann\'ees florissantes se situent dans cette
p\'eriode.  \cite[p.\;292]{Sp16}
\end{quote}
Spiesser continues with a more detailed description of Fermat's circle
of acquaintances at Castres:
\begin{quote}
Le r\'eseau castrais de Pierre de Fermat s'organise autour de
l'Acad\'emie locale.  Bien qu'\`a notre connaissance, il n'ait jamais
\'et\'e membre de cette institution, elle constitue la pierre de
touche de ses alliances amicales et litt\'eraires. L'avocat Pierre
Saporta, membre de cette petite acad\'emie et traducteur du `Trait\'e
sur la mesure des eaux courantes' du p\`ere b\'en\'edictain Benedetto
Castelli, ainsi que d'un ouvrage de Torricelli sur le `mouvement des
eaux', admire Fermat.  (ibid.)
\end{quote}

Given that Fermat's main professional occupation as \emph{Conseiller}
at the Parliament of Toulouse and its \emph{Chambre de l'\'edit} at
Castres involved delicate matters of interdenominational dialogue,
Fermat could hardly have allowed himself to get involved in anything
related to issues of controversy dividing the different and sometimes
warring (see Section~\ref{s39}) communities, such as the issues of
transubstantiation and its potential tensions with atomism (and, by
implication, with indivisibles and the infinitely small), seen at the
time as potentially contrary to canon established at the Council of
Trent in the previous century, with dire consequences for failing to
toe the canonical line.

\subsection
{The lesson of the Marranos of Toulouse}
\label{s44}

A case in point (of failing to toe the canonical line) is the tale of
18 Toulouse Marranos, judged by the honorable judge de Fermat
\emph{fils} among others and sentenced in 1685.  The Marranos escaped
in time and only their effigies were burned \emph{in absentia} in
1692: ``Les accus\'es ont eu raison de s'enfuir.  Toulouse les aurait
certainement envoy\'es au b\^ucher.''%
\footnote{``The accused were right to flee.  Toulouse would have
certainly sent them to the stake.''  One of the 18 Marranos, named
Roque de Leon, was an ancestor of Jacques Blamont; see
\cite[p.\;17]{Bl00}.}
\cite[p.\;352]{Bl00}.

When Pierre Fermat was a teenager, Lucilio Vanini (1585--1619) was
executed at Toulouse on charges of blasphemy \cite[p.\;62]{Mo17}.

No \emph{Conseiller} other than Pierre de Fermat of the Parliament of
Toulouse was interested in mathematics and no member in the protestant
academy of Castres was a mathematician, but a rumor of dubious
dealings in consubstantial natural philosophy may well have ruined
Fermat, who had his share of enemies at the Parliament, including the
President Fieubet.%
\footnote{We find the following period detail: ``\ldots l'intendant du
Languedoc, Claude Bazin de Bezons, a \'emis en 1663 un jugement aussi
lapidaire que p\'ejoratif sur son [Fermat's] activit\'e de magistrat.
Dans une note adress\'ee au ministre Colbert sur les membres du
parlement de Toulouse, il d\'eclare que Fermat est `homme de beaucoup
d'\'erudition, a commerce de tous cost\'es avec des s\c cavans, mais
assez int\'eress\'e, n'est pas trop bon rapporteur et est confus'{}''
\cite[p.\;52]{Mo17}.  Bazin added that Fermat ``n'est pas des amys du
premier pr\'esident'' (ibid.), namely Gaspard de Fieubet.  President
Fieubet overruled Fermat in the matter of the priest Raymond Delpoy
and had Delpoy hanged.  ``Fermat, qui n'\'etait pas convaincu de sa
culpabilit\'e, en fut choqu\'e'' (ibid.).}
To put in bluntly, the honorable judge Pierre de Fermat risked ending
up on the wrong side of the bench (or suffering the fate of his
great-grandfather; see Section~\ref{s41}) had he pursued those
exceedingly little devils with excessive zeal.

The infinitely small were seen as inseparable from atomism, which was
seen as heretical because it contradicted canon 13.2 of the Council of
Trent, which was one of the major issues dividing catholics and
protestants, so much so that books and people were burned over this.

In a more liberal milieu, mathematicians like Wallis, Barrow, and
Huygens felt freer to speak of, and indeed freely spoke of, the
infinitely small, resuming the discussion where the once-flourishing
Italian school had left it.

\section*{Acknowledgments}

We are grateful to Catherine Goldstein, Israel Kleiner, Eberhard
Knobloch, David Schaps, and Maryvonne Spiesser for helpful comments.
We thank Thomas Willard for the information on Fludd and Kepler given
in note~\ref{f8b}.  M.\;Katz was partially supported by the Israel
Science Foundation grant no.\;1517/12.

\bigskip\noindent \textbf{Jacques Bair} is Professor Emeritus at the
University of Li\`ege (Belgium) who taught to students in economics
and in management.  He began as a specialist in convex geometry (see,
for example, Springer Lecture Notes in Mathematics, N 489 and 802 by
Bair and Fourneau).  His present interests are epistemology and the
teaching of mathematics.  His most recent work concerns the
foundations of mathematical analysis.

\bigskip\noindent \textbf{Mikhail G. Katz} (BA Harvard '80; PhD
Columbia '84) is Professor of Mathematics at Bar Ilan University,
Ramat Gan, Israel.  He observed a while ago that with Abraham
Robinson's passing, the quest for the ghosts of departed
\emph{quantifiers} in the work of Fermat, Leibniz, Euler, and Cauchy
began in earnest, as many a triumvirate scholar sought to transmogrify
their infinitesimal procedures into proto-Weierstrassian hocus-pocus.
He therefore resolved not to keep mum about it and with the help of
coauthors who were adequal to the task, launched a decidedly
nonstandard, long march through triumvirate historiography.  While in
the present article we examine the issue at stake with Fermat's
method, we take a fresh look at Leibniz in \cite{16a}, at Euler in
\cite{17a}, and at Cauchy in \cite{18b}.

\bigskip\noindent \textbf{David Sherry} is Professor of Philosophy at
Northern Arizona University, in the tall, cool pines of the Colorado
Plateau.  He has research interests in philosophy of mathematics,
especially applied mathematics and non-standard analysis.  Recent
publications include ``Fields and the Intelligibility of Contact
Action,'' \emph{Philosophy 90} (2015), 457--478.  ``Leibniz's
Infinitesimals: Their Fictionality, their Modern Implementations, and
their Foes from Berkeley to Russell and Beyond,'' with Mikhail Katz,
\emph{Erkenntnis 78} (2013), 571-625.  ``Infinitesimals, Imaginaries,
Ideals, and Fictions,'' with Mikhail Katz, \emph{Studia Leibnitiana
44} (2012), 166--192.  ``Thermoscopes, Thermometers, and the
Foundations of Measurement,'' \emph{Studies in History and Philosophy
of Science 24} (2011), 509--524.  ``Reason, Habit, and Applied
Mathematics,'' \emph{Hume Studies 35} (2009), 57-85.

\end{document}